\newtheorem{theorem}{Theorem}[section]
\newtheorem{proposition}[theorem]{Proposition}
\newtheorem{definition}{Definition}[section]
\newtheorem{lemma}[theorem]{Lemma}
\newtheorem{corollary}[theorem]{Corollary}
\newtheorem{conjecture}{Conjecture}
\newcommand{\blue}[1]{\textcolor{blue}{#1}}
\title{A project on cyclic ordering of some families of graphs\thanks{This is a summer research project (2022) of the three authors supervised by Dr. Xiaofeng Gu of University of West Georgia.}}
\author{Cedric Xia, Joseph Zhang, Allan Zhou
\thanks{The authors are listed in alphabetical order by last name.
\newline Email: xiacedric909@gmail.com, zh.joseph64@gmail.com, allanzhou777@gmail.com}
\\
\\
Westlake High School\\ Austin, TX 78746
}
\begin{document}
\date{\today}
\maketitle

\begin{abstract}
\normalsize
Let $G$ be an $n$-vertex connected graph. A cyclic base ordering of $G$ is a cyclic ordering of all edges such that every cyclically consecutive $n-1$ edges induce a spanning tree of $G$. In this project, we study cyclic base ordering of triangular grid graphs, series composition of graphs, generalized theta graphs, and circulant graphs.
\end{abstract}

{\noindent {\bf Key words:} cyclic base ordering, spanning tree, triangular grid graph, series composition, generalized theta graph, circulant graph}

\newpage
\tableofcontents

\newpage
\section{Introduction}

Let $G=(V, E)$ be a connected graph.
\begin{definition}
A {\bf cyclic base ordering} or for short {\bf CBO} of $G$ is a cyclic ordering of $E(G)$ such that every cyclically consecutive $|V(G)|-1$ edges induce a spanning tree of $G$. Equivalently, a cyclic base ordering is a bijection $\mathcal{O}: E(G) \longrightarrow \{1,2,\ldots,|E(G)|\}$ such that for each $i=1,2,\ldots,|E(G)|$, $\{\mathcal{O}^{-1}(k): k= i, i+1,\ldots, i+|V(G)|-2\}$ induces a spanning tree of $G$, where the labelling $k$ is equivalent to $k-|E(G)|$ if $k >|E(G)|$. If $G$ has a CBO, then we say $G$ is {\bf cyclically orderable}.
\end{definition}

\begin{definition}
The {\bf density} of a connected graph $G$ is defined to be $\displaystyle d(G)=\frac{|E(G)|}{|V(G)|-1}$, and $G$ is {\bf uniformly dense} if $d(H)\leq d(G)$ for every connected subgraph $H$ of $G$.
\end{definition}

Cyclically orderable graphs are closely related to the density of graphs. Kajitani, Ueno and Miyano~\cite{KaUM88} made the following conjecture, and they proved the necessity.

\begin{conjecture}[Kajitani, Ueno and Miyano~\cite{KaUM88}]
A connected graph $G$ is cyclically orderable if and only if $G$ is uniformly dense.
\end{conjecture}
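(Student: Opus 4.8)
The plan is to prove both directions of the biconditional; throughout, write $n=|V(G)|$, $m=|E(G)|$, and $r=n-1$ for the rank of the graphic matroid $M(G)$, so that $d(G)=m/r$. \textbf{Necessity} (cyclically orderable $\Rightarrow$ uniformly dense): I would re-derive the direction of Kajitani, Ueno and Miyano by a double-counting argument, since it also fixes notation for the converse. Given a CBO, let $W_0,\dots,W_{m-1}$ be the windows, where $W_i$ is the spanning tree on the $r$ cyclically consecutive edges starting at position $i$. Each position of the cycle lies in exactly $r$ windows, hence each edge lies in exactly $r$ windows. For a connected subgraph $H$ on $s$ vertices, $W_i\cap E(H)$ is a forest and so has at most $s-1$ edges; counting edge–window incidences two ways yields $r\,|E(H)|=\sum_i|W_i\cap E(H)|\le m(s-1)$, i.e. $d(H)=|E(H)|/(s-1)\le m/r=d(G)$. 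Thus $G$ is uniformly dense.

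\textbf{Sufficiency, Step 1: a perfect fractional ordering.} The real content is the converse, and my first move is to pass to the fractional relaxation. A short partition argument shows that uniform density forces the strength to be maximal: if some partition $P$ into $k$ parts had fewer than $d(G)(k-1)$ cross-edges, then the interior edges would satisfy $\sum_i|E(G[V_i])|>d(G)\sum_i(|V_i|-1)$, so some part—and hence some connected subgraph—would be denser than $G$, a contradiction; therefore the strength $\sigma(G)$ equals $d(G)$. By the Nash–Williams–Tutte duality, $\sigma(G)$ is the optimum of the fractional spanning-tree packing LP, so there are spanning trees with nonnegative weights summing to $m/r$ and covering every edge to capacity exactly $1$. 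Renormalizing gives an \emph{edge-balanced} distribution $\mu$ over spanning trees with $\Pr_\mu[e\in T]=r/m$ for every edge $e$. This is the ``smeared-out'' version of a CBO; the whole difficulty is to realize it integrally and cyclically.

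\textbf{Sufficiency, Step 2: the integral cyclic walk, and the coprime case.} The key reformulation I would exploit is that a CBO is exactly a closed walk $W_0,W_1,\dots,W_{m-1},W_0$ of length $m$ in the base-exchange (tree) graph of $M(G)$ in which consecutive trees differ by a single exchange, each edge leaves the window exactly once and enters exactly once over the full cycle, and consequently each edge is present for a contiguous arc of exactly $r$ steps. I would first settle the \emph{coprime case} $\gcd(m,r)=1$, adapting the rotation argument of van den Heuvel and Thomassé: when $m$ and $r$ are coprime the shift by $r$ generates all residues modulo $m$, so an edge-balanced schedule propagates around the whole cycle without arithmetic defect, and uniform density supplies, at each step, a valid base exchange. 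With the coprime case in hand I would try to bootstrap to arbitrary $\gcd$ by a blow-up: add parallel copies of edges (or split $M(G)$ into layers by matroid union) to reach a coprime, still uniformly dense instance, build a CBO there, and project it back to $G$.

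\textbf{Main obstacle.} The projection step is exactly where I expect the argument to stall, and it is precisely where the conjecture is still open. When $g=\gcd(m,r)>1$, the edge-balanced distribution has inclusion density $r/m$ with reduced denominator $m/g$, while the cyclic sequence has only period $m$; the coarser period cannot resolve the finer fractional structure, and a generic rounding of $\mu$ leaves some length-$r$ window short of a base by the $\gcd$ defect, with no single exchange repairing all windows at once. Overcoming this—equivalently, producing a closed walk through bases whose entry/exit schedule is consistent in every residue class modulo $g$—is the crux. I would spend the bulk of the effort on a potential-function or discharging scheme tracking, for each residue class modulo $g$, the surplus of currently present edges, aiming to show that uniform density grants enough exchange flexibility to drive every class to balance; but I regard proving this for all $g$ as the genuinely hard and, at present, unresolved core of the statement.
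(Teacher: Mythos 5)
The statement you were asked to prove is not proved in the paper at all: it is stated as a conjecture (due to Kajitani, Ueno and Miyano), and the paper explicitly records that only the necessity is known, while the sufficiency ``is still unsettled'' and has merely been verified for particular graph families --- which is what the rest of the paper then does. So there is no paper proof to compare against, and no complete proof of the biconditional is known anywhere. Judged against that, your proposal is as strong as it can honestly be. Your necessity argument is correct and is essentially the standard double-counting proof: each edge lies in exactly $r=n-1$ of the $m$ windows, each window meets a connected subgraph $H$ on $s$ vertices in an acyclic edge set of size at most $s-1$, and comparing the two counts of edge--window incidences gives $r\,|E(H)|\le m(s-1)$, i.e. $d(H)\le d(G)$.

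For sufficiency, your plan is a reasonable reconstruction of the state of the art rather than a proof, and you say so yourself. The reduction from uniform density to maximal strength, and the passage to an edge-balanced fractional tree packing via Nash--Williams--Tutte duality, are both sound; and the coprime case $\gcd(m,r)=1$ is indeed a theorem of van den Heuvel and Thomass\'e (proved for matroids generally), so invoking it is legitimate. The genuine gap is exactly the one you name: the bootstrap from the coprime case to $\gcd(m,r)=g>1$ by blowing up and projecting back has no known repair, since a projected ordering need not keep every length-$r$ window independent, and no potential-function or discharging scheme is known that rebalances the residue classes modulo $g$. That step is precisely the open content of the conjecture. Your submission should therefore be read as (i) a correct proof of the easy direction, matching the cited argument of Kajitani, Ueno and Miyano, plus (ii) an accurate diagnosis of why the hard direction remains open --- not as a proof of the statement, which the paper itself does not claim to possess.
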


The sufficiency is still unsettled, but has been verified for many graph families in~\cite{KaUM88,GuHL14,GLYZ22,GZ22,Xiao22}.

In this project, we study cyclic base ordering of triangular grid graphs, series composition of graphs, generalized theta graphs, and circulant graphs, etc.

Given an edge ordering of a graph $G$, every cyclically consecutive $|V(G)|-1$ edges is called a {\bf progression}. To verify an edge ordering is a CBO, it suffices to show that any progression induces a spanning tree.

\section{Triangular grid graphs}

\begin{definition}
Let $k$ be a positive integer and $T_k$ be a triangular grid graph with $k$ levels which forms the shape of a larger triangle.
\end{definition}

Here are some examples of triangular grid graphs.
\begin{figure}[htbp]
    \centering
        \begin{tikzpicture}[every node/.style={circle,thick,draw},scale=1.3] 
        
        \begin{scope}
        \node (1) at (0, 0) {};
        \end{scope}
        
        \begin{scope}[xshift=1.5cm]
        \node (1) at (0.625, 1) {};
        \node (2) at (0, 0) {};
        \node (3) at (1.25, 0) {};
        \begin{scope}[>={},every node/.style={fill=white,circle,inner sep=0pt,minimum size=12pt}]
        \path [] (1) edge  (2);
        \path [] (1) edge  (3);
        \path [] (2) edge  (3);
        \end{scope}
        \end{scope}

        \begin{scope}[xshift=4.25cm]
        \node (1) at (1.25, 2) {};
        \node (2) at (0.625, 1) {};
        \node (3) at (1.875, 1) {};
        \node (4) at (0, 0) {};
        \node (5) at (1.25, 0) {};
        \node (6) at (2.5, 0) {};
        \begin{scope}[>={},every node/.style={fill=white,circle,inner sep=0pt,minimum size=12pt}]
        \path [] (1) edge  (2); 
        \path [] (1) edge  (3);
        \path [] (2) edge  (3);
        \path [] (2) edge  (4); 
        \path [] (2) edge  (5);
        \path [] (3) edge  (5);
        \path [] (3) edge  (6); 
        \path [] (4) edge  (5);
        \path [] (5) edge  (6);
        \end{scope}
        \end{scope}
        
        \begin{scope}[xshift=8.25cm] 
        \node (1) at (1.875, 3) {};
        \node (2) at (1.25, 2) {};
        \node (3) at (2.5, 2) {};
        \node (4) at (0.625, 1) {};
        \node (5) at (1.875, 1) {};
        \node (6) at (3.125, 1) {};
        \node (7) at (0, 0) {};
        \node (8) at (1.25, 0) {};
        \node (9) at (2.5, 0) {};
        \node (10) at (3.75, 0) {};
        \begin{scope}[>={},every node/.style={fill=white,circle,inner sep=0pt,minimum size=12pt}]
            \path [] (1) edge  (2);
            \path [] (1) edge  (3);
            \path [] (2) edge  (3);
            \path [] (2) edge  (4);
            \path [] (2) edge  (5);
            \path [] (3) edge  (5);
            \path [] (3) edge  (6);
            \path [] (4) edge  (5);
            \path [] (5) edge  (6);
            \path [] (4) edge  (7);
            \path [] (4) edge  (8);
            \path [] (5) edge  (8);
            \path [] (5) edge  (9);
            \path [] (6) edge  (9);
            \path [] (6) edge  (10);
            \path [] (7) edge  (8);
            \path [] (8) edge  (9);
            \path [] (9) edge  (10);
        \end{scope}
        \end{scope}

        \end{tikzpicture}
        \caption{Triangular grid graphs $T_1$, $T_2$, $T_3$, $T_4$}
\end{figure}
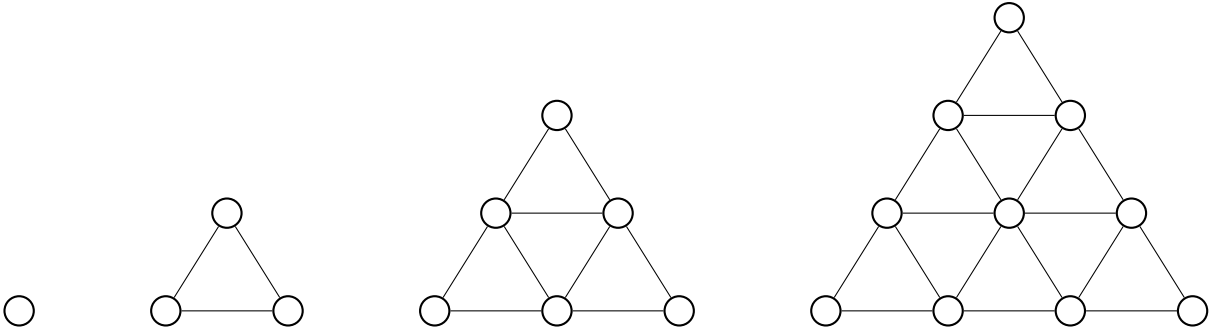

\begin{lemma}\label{lem:tk}
For any $T_k$, $\displaystyle |V(T_k)|=\frac{k(k+1)}{2}$, $\displaystyle |E(T_k)|=\frac{3k(k-1)}{2}$; and $\delta(T_k)=2$ if $k\ge 2$.
\end{lemma}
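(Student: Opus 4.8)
The plan is to work directly from the level structure of $T_k$. First I would fix an explicit description to replace the informal geometric one in the statement: $T_k$ consists of $k$ horizontal levels, where the $i$-th level from the top contains exactly $i$ vertices, and a vertex in level $i$ is joined to its immediate left and right neighbours within level $i$ and to its two nearest vertices in level $i+1$ (its lower-left and lower-right neighbours). With this convention the vertex count is immediate: summing over levels gives $|V(T_k)| = \sum_{i=1}^{k} i = \frac{k(k+1)}{2}$.

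For the edge count I would exploit the three-fold rotational symmetry of $T_k$, which partitions $E(T_k)$ into three classes according to edge direction (one horizontal family and two diagonal families). The horizontal edges are those within a level, and level $i$ contributes $i-1$ of them, so there are $\sum_{i=1}^{k}(i-1) = \frac{k(k-1)}{2}$ horizontal edges; since the $120^\circ$ rotation permutes the three families cyclically, each family has the same size, whence $|E(T_k)| = 3 \cdot \frac{k(k-1)}{2} = \frac{3k(k-1)}{2}$. As a self-contained alternative that avoids invoking symmetry, I would set up the recursion obtained by passing from $T_{k-1}$ to $T_k$: adjoining the new bottom level introduces $k$ vertices, $k-1$ new horizontal edges, and $2(k-1)$ new diagonal edges joining the old bottom level to the new one, so $|E(T_k)| = |E(T_{k-1})| + 3(k-1)$ with $|E(T_1)| = 0$; telescoping this sum again yields $\frac{3k(k-1)}{2}$. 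One could also confirm the formula via Euler's formula, using that for $k \ge 2$ the large triangle is subdivided into $(k-1)^2$ small bounded faces.

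For the minimum degree when $k \ge 2$, I would argue in two steps. First, every vertex lies in at least one small triangle, so every vertex has degree at least $2$, giving $\delta(T_k) \ge 2$. Second, I would exhibit a vertex of degree exactly $2$, namely any of the three corners of the large triangle: the apex is adjacent only to the two vertices directly below it, and the analogous statement holds at the two bottom corners; this forces $\delta(T_k) = 2$. For later use I would also record the full degree census, that the three corners have degree $2$, the remaining boundary vertices have degree $4$, and all interior vertices have degree $6$.

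I expect the only real obstacle to be that the definition of $T_k$ is geometric and informal, so the crux is to pin down a precise adjacency rule before counting; everything afterward is routine arithmetic. The points needing care are the boundary behaviour in the degree argument — distinguishing corner, edge, and interior vertices correctly — and the degenerate small cases, in particular that $T_1$ is a single vertex with no edges (consistent with the formulas, since they vanish at $k=1$) and that $\delta$ is asserted only for $k \ge 2$, with $T_2$ being a single triangle in which all three vertices simultaneously realize degree $2$.
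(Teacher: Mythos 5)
Your proposal is correct, and it both overlaps with and extends the paper's argument. For the edge count, the paper's proof is exactly the recursion you offer as your ``self-contained alternative'': adjoining level $k$ contributes $2(k-1)$ diagonal edges to the previous bottom row plus $k-1$ horizontal edges, giving $|E(T_k)| = |E(T_{k-1})| + 3(k-1)$ and hence $\frac{3k(k-1)}{2}$; your primary route via the three-fold rotational symmetry (count the $\sum_{i=1}^{k}(i-1) = \frac{k(k-1)}{2}$ horizontal edges and multiply by $3$ because the $120^\circ$ rotation permutes the three direction classes) is a genuinely different and slightly slicker one-shot count that avoids induction, at the mild cost of having to justify that the rotation is indeed a graph automorphism permuting the classes. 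Your vertex count by direct summation is the same triangle-number observation the paper makes. Most notably, you prove the claim $\delta(T_k)=2$ for $k\ge 2$ (lower bound from every vertex lying in a small triangle, upper bound from the degree-$2$ corners), whereas the paper's proof is silent on this part of the lemma even though it is stated and later used in the non-orderability argument for $k>4$; your degree census (corners $2$, other boundary vertices $4$, interior vertices $6$) fills a real gap in the paper's write-up.
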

\begin{proof}
Observe that $|V(T_k)|$ is the $k$-th triangle number. Thus 
$$|V(T_{k})|=|V(T_{k-1})|+k$$ and so
$$|V(T_k)|=\frac{k(k+1)}{2}.$$

Further observe that each new level adds $2$ edges per vertex on the previous bottom row, and one edge between adjacent vertices on the new bottom row. It follows that 
$$|E(T_{k})|=|E(T_{k-1})|+3(k-1)$$
and
$$|E(T_k)|=\frac{3k(k-1)}{2}.$$
\end{proof}

\begin{lemma}[\cite{LYZ21}]\label{lem:mindegree}
If a nonempty graph $G$ has a CBO, then $\displaystyle \delta(G)\ge\frac{|E(G)|}{|V(G)|-1}$.
\end{lemma}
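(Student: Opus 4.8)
The plan is to exploit the defining property of a CBO through a double-counting argument. Write $n=|V(G)|$ and $m=|E(G)|$, and fix a vertex $v$ with $\deg(v)=\delta(G)$. The two quantities I would like to relate are (i) the total number of progressions in the given CBO, and (ii) the number of progressions containing any fixed edge. For (i), since the ordering is cyclic and a progression consists of $n-1$ consecutive edges starting at any of the $m$ positions, there are exactly $m$ progressions. For (ii), an edge occupying position $i$ belongs to precisely those progressions whose starting position lies in $\{i-(n-2),\ldots,i\}$ (indices taken cyclically), so every edge lies in exactly $n-1$ progressions.

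The key observation is that each progression induces a spanning tree, and in any spanning tree the vertex $v$ has degree at least $1$; hence every progression contains at least one edge incident to $v$. I would then count the set of pairs $(P,e)$ where $P$ is a progression, $e$ is an edge incident to $v$, and $e\in P$. Summing over progressions gives at least $m$ such pairs, since each of the $m$ progressions contributes at least one. Summing instead over the $\delta(G)$ edges incident to $v$, and using that each such edge lies in exactly $n-1$ progressions, gives exactly $\delta(G)\,(n-1)$ pairs.

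Comparing the two counts yields $\delta(G)\,(n-1)\ge m$, that is, $\delta(G)\ge \frac{|E(G)|}{|V(G)|-1}$, which is the desired inequality. Note that the bound is tight for trees, where $\delta(G)=1$ and $\frac{m}{n-1}=1$.

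Because the argument is a short counting estimate, there is no serious obstacle; the only point requiring care is the bookkeeping in claim (ii)---verifying that the cyclic wraparound really makes each edge lie in exactly $n-1$ progressions rather than fewer---together with the implicit fact that $m\ge n-1$ for a connected graph, so that progressions are well defined.
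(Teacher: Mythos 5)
Your proof is correct. Note that the paper itself gives no proof of this lemma---it is quoted from the reference [LYZ21]---so there is nothing in-paper to compare against; your double-counting of pairs $(P,e)$ with $e$ incident to a minimum-degree vertex is the standard argument (equivalent to the pigeonhole observation that edges incident to $v$ must appear in every window of $|V(G)|-1$ consecutive positions, so the cyclic gaps between them are at most $|V(G)|-1$ and sum to $|E(G)|$), and you correctly flag the two points that need care: connectivity gives $|E(G)|\ge |V(G)|-1$ so each edge lies in exactly $|V(G)|-1$ progressions, and nonemptiness gives $|V(G)|\ge 2$ so every spanning tree meets $v$.
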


\begin{theorem}
A triangular grid graph $T_k$ is cyclically orderable if and only if $k\le 4$.
\end{theorem}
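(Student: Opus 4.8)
The plan is to split the biconditional into an easy necessity direction, handled by a density computation together with the min-degree bound of Lemma~\ref{lem:mindegree}, and a harder sufficiency direction, handled by exhibiting explicit cyclic base orderings for $k\in\{1,2,3,4\}$.

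First I would record the density. Using Lemma~\ref{lem:tk}, for $k\ge 2$ we have
$$ d(T_k)=\frac{|E(T_k)|}{|V(T_k)|-1}=\frac{3k(k-1)/2}{k(k+1)/2-1}=\frac{3k(k-1)}{(k-1)(k+2)}=\frac{3k}{k+2}. $$
For necessity, suppose $T_k$ is cyclically orderable with $k\ge 2$. Since $\delta(T_k)=2$ by Lemma~\ref{lem:tk}, Lemma~\ref{lem:mindegree} forces $2=\delta(T_k)\ge d(T_k)=\tfrac{3k}{k+2}$, i.e. $2(k+2)\ge 3k$, which simplifies to $k\le 4$. Contrapositively, every $T_k$ with $k\ge 5$ fails to be cyclically orderable. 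The degenerate case $k=1$ (a single vertex with no edges) is vacuously orderable, so this settles one direction of the theorem.

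For sufficiency I would produce an explicit CBO for each of $T_1,T_2,T_3,T_4$. The cases $T_1$ (no edges) and $T_2=K_3$ (any cyclic ordering of the three edges, since any two edges of a triangle form a spanning path) are immediate. For $T_3$ (6 vertices, 9 edges) and $T_4$ (10 vertices, 18 edges) I would fix a labelling of the edges, present an explicit cyclic permutation, and check that every progression is a spanning tree. To streamline verification I would use the standard reduction: a set of exactly $|V|-1$ edges is a spanning tree if and only if it is acyclic, so it suffices to confirm that no window of $|V(T_k)|-1$ cyclically consecutive edges contains a cycle of $T_k$; in practice one only needs to rule out the short cycles, namely the triangular faces, from sitting inside a single window.

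The main obstacle is the sufficiency construction for $T_4$: there are $18$ progressions, each of size $9$, and finding one ordering that makes all of them simultaneously spanning trees is a genuine combinatorial search rather than a one-line argument. A useful guide is that $d(T_4)=2$ is an integer, so $T_4$ decomposes into two edge-disjoint spanning trees; I would try to read a CBO off such a decomposition (interleaving the two trees so that each window of $9$ edges collects a full tree's worth of edges without closing a cycle) and then certify the resulting ordering by the acyclicity check above. Exhibiting the ordering explicitly, e.g. via an edge-label table or a figure, and confirming all windows is the step that carries the real work.
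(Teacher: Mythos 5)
Your necessity argument is sound and is essentially the paper's: the paper likewise combines Lemma~\ref{lem:tk} with Lemma~\ref{lem:mindegree}, comparing $\delta(T_k)=2$ against the density $\frac{3k^2-3k}{k^2+k-2}$ (your simplification to $\frac{3k}{k+2}$ via the factorization $k^2+k-2=(k-1)(k+2)$ is cleaner than the paper's unsimplified fraction, but the logic is identical). The genuine gap is in sufficiency. For $k=3,4$ the content of the theorem is precisely the existence of explicit cyclic base orderings, and you never produce them: you describe how you \emph{would} search for one (interleaving two edge-disjoint spanning trees of $T_4$) and you yourself concede that exhibiting the ordering and checking all windows ``is the step that carries the real work.'' The paper does that work, displaying concrete labelled orderings of the $9$ edges of $T_3$ and the $18$ edges of $T_4$ in Figure~\ref{fig:triangularCBO}. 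Without such orderings, or a proof that your interleaving heuristic must succeed (none is offered, and the existence of two edge-disjoint spanning trees does not by itself yield a CBO), the ``if'' direction remains unproved.

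A secondary flaw lies in your proposed verification shortcut. The acyclicity criterion itself is correct: a set of exactly $|V|-1$ edges induces a spanning tree if and only if it contains no cycle. But your claim that ``one only needs to rule out the short cycles, namely the triangular faces'' is unsound, because $T_3$ and $T_4$ contain non-triangular cycles short enough to fit in a window: two faces sharing an edge bound a $4$-cycle, and a window of $9$ edges in $T_4$ can accommodate cycles of length up to $9$. A window containing no triangular face could still contain such a cycle, so a triangles-only check could wrongly certify an ordering. Any honest verification must exclude all cycles, i.e.\ test acyclicity (equivalently, spanning connectivity) of every window directly.
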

\begin{proof}
For $k=1$, $T_k$ has no edge and it is trivial. For each $k=2,3,4$, a CBO of each $T_k$ is shown in Figure~\ref{fig:triangularCBO}.

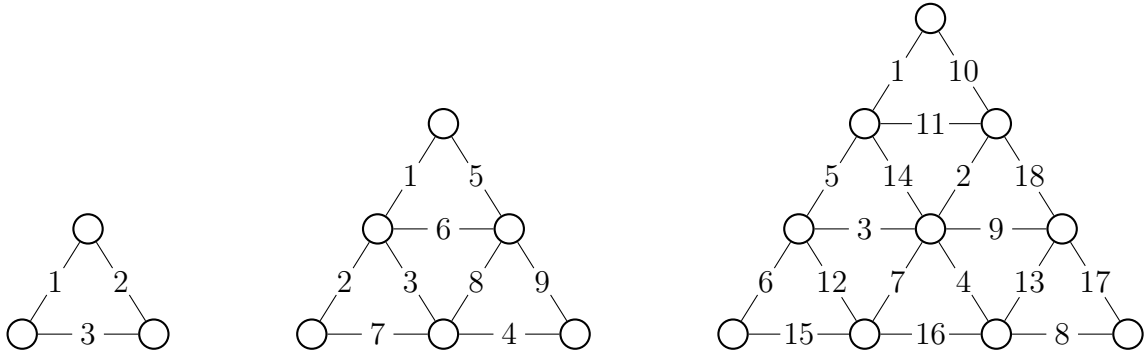
\begin{figure}[htbp]
    \centering
        \begin{tikzpicture}[every node/.style={circle,thick,draw},scale=1.4] 
        
        \begin{scope}[xshift=1.5cm]
        \node (1) at (0.625, 1) {};
        \node (2) at (0, 0) {};
        \node (3) at (1.25, 0) {};
        \begin{scope}[>={},every node/.style={fill=white,circle,inner sep=0pt,minimum size=12pt}]
        \path [] (1) edge node {1} (2);
        \path [] (1) edge node {2} (3);
        \path [] (2) edge node {3} (3);
        \end{scope}
        \end{scope}

        \begin{scope}[xshift=4.25cm]
        \node (1) at (1.25, 2) {};
        \node (2) at (0.625, 1) {};
        \node (3) at (1.875, 1) {};
        \node (4) at (0, 0) {};
        \node (5) at (1.25, 0) {};
        \node (6) at (2.5, 0) {};
        \begin{scope}[>={},every node/.style={fill=white,circle,inner sep=0pt,minimum size=12pt}]
        \path [] (1) edge node {1} (2); 
        \path [] (1) edge node {5} (3);
        \path [] (2) edge node {6} (3);
        \path [] (2) edge node {2} (4); 
        \path [] (2) edge node {3} (5);
        \path [] (3) edge node {8} (5);
        \path [] (3) edge node {9} (6); 
        \path [] (4) edge node {7} (5);
        \path [] (5) edge node {4} (6);
        \end{scope}
        \end{scope}
        
        \begin{scope}[xshift=8.25cm]  
        \node (1) at (1.875, 3) {};
        \node (2) at (1.25, 2) {};
        \node (3) at (2.5, 2) {};
        \node (4) at (0.625, 1) {};
        \node (5) at (1.875, 1) {};
        \node (6) at (3.125, 1) {};
        \node (7) at (0, 0) {};
        \node (8) at (1.25, 0) {};
        \node (9) at (2.5, 0) {};
        \node (10) at (3.75, 0) {};
        \begin{scope}[>={},every node/.style={fill=white,circle,inner sep=0pt,minimum size=12pt}]
            \path [] (1) edge node {1} (2);
            \path [] (1) edge node {10} (3);
            \path [] (2) edge node {11} (3);
            \path [] (2) edge node {5} (4);
            \path [] (2) edge node {14} (5);
            \path [] (3) edge node {2} (5);
            \path [] (3) edge node {18} (6);
            \path [] (4) edge node {3} (5);
            \path [] (5) edge node {9} (6);
            \path [] (4) edge node {6} (7);
            \path [] (4) edge node {12} (8);
            \path [] (5) edge node {7} (8);
            \path [] (5) edge node {4} (9);
            \path [] (6) edge node {13} (9);
            \path [] (6) edge node {17} (10);
            \path [] (7) edge node {15} (8);
            \path [] (8) edge node {16} (9);
            \path [] (9) edge node {8} (10);
        \end{scope}
        \end{scope}

        \end{tikzpicture}
        \caption{CBOs of $T_2$, $T_3$, $T_4$}
        \label{fig:triangularCBO}
\end{figure}

Now we show that $T_k$ is not cyclically orderable if $k> 4$. By Lemma~\ref{lem:tk},
$$\frac{|E(T_k)|}{|V(T_k)|-1}=\frac{\frac{3k(k-1)}{2}}{\frac{k(k+1)}{2}-1}=\frac{3k^2-3k}{k^2+k-2}.$$
It is not hard to verify that $\displaystyle \frac{|E(T_k)|}{|V(T_k)|-1}=\frac{3k^2-3k}{k^2+k-2} > 2 =\delta(T_k)$ when $k>4$, contrary to Lemma~\ref{lem:mindegree}.
\end{proof}

\section{Series composition of graphs}
\begin{definition}
Given graphs $G$ and $H$ with $u\in V(G)$ and $v\in V(H)$. The {\bf series composition}  of $G$ and $H$, denoted $G\oplus H$, is the graph obtained from $G$ and $H$ by gluing $u$ and $v$ as a single vertex. Note that $G\oplus H$ may not be unique if $u,v$ are not specified. A series composition is also called a {\bf 1-sum}.
\end{definition}

Here is an example of series composition, which is actually a friendship graph. A {\bf friendship graph} $Fd_t$ is a graph obtained from $t$ copies of $K_3$ by gluing them at a single vertex. Generally, a {\bf windmill graph} $Wd(k,t)$ is a a graph obtained from $t$ copies of the complete graph $K_k$ by gluing them at a single vertex. Clearly, $Fd_t=Wd(3,t)$.

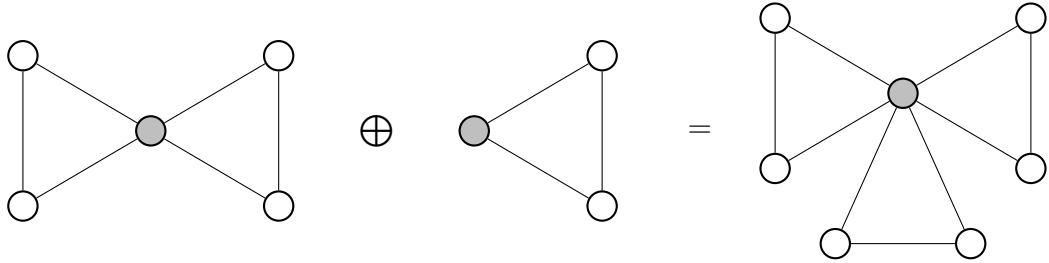
\begin{figure}[htb]
    \centering
    \begin{tikzpicture}[every node/.style={circle,thick,draw},scale=1] 
        \begin{scope}
            \node (1) at (0, 0) {};
            \node (2) at (0, 2) {};
            \node[fill=lightgray] (3) at (1.7, 1) {};
            \node (4) at (3.4, 0) {};
            \node (5) at (3.4, 2) {};
            \begin{scope}[>={},every node/.style={fill=white,circle,inner sep=0pt,minimum size=12pt}]
                \path [] (1) edge (2);
                \path [] (1) edge (3);
                \path [] (2) edge (3);
                \path [] (3) edge (4);
                \path [] (3) edge (5);
                \path [] (4) edge (5);
            \end{scope}
        \end{scope}
    \node[draw=white,circle,inner sep=0pt,minimum size=12pt] at (4.7, 1) {$\bigoplus$};
        \begin{scope}[xshift = 6cm]
            \node[fill=lightgray] (1) at (0, 1) {};
            \node (2) at (1.7, 2) {};
            \node (3) at (1.7, 0) {};
            \begin{scope}[>={},every node/.style={fill=white,circle,inner sep=0pt,minimum size=12pt}]
                \path [] (1) edge  (2);
                \path [] (1) edge  (3);
                \path [] (2) edge  (3);
            \end{scope}
        \end{scope}
    \node[draw=white,circle,inner sep=0pt,minimum size=12pt] at (9, 1) {$=$};

    \begin{scope}[xshift = 10cm, yshift=0.5cm]
            \node (1) at (0, 0) {};
            \node (2) at (0, 2) {};
            \node[fill=lightgray] (3) at (1.7, 1) {};
            \node (4) at (3.4, 0) {};
            \node (5) at (3.4, 2) {};
            \node (6) at (0.8, -1) {};
            \node (7) at (2.6, -1) {};
            \begin{scope}[>={},every node/.style={fill=white,circle,inner sep=0pt,minimum size=12pt}]
                \path [] (1) edge  (2);
                \path [] (1) edge  (3);
                \path [] (2) edge  (3);
                \path [] (3) edge  (4);
                \path [] (3) edge  (5);
                \path [] (4) edge  (5);
                \path [] (3) edge  (6);
                \path [] (3) edge  (7);
                \path [] (6) edge  (7);
            \end{scope}
        \end{scope}
    \end{tikzpicture}
    \caption{An example of $G\oplus H$}
\end{figure}

\begin{lemma}\label{lem:sctree}
Any series composition of two trees is also a tree.
\end{lemma}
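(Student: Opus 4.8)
The plan is to verify the two defining properties of a tree for $G \oplus H$: connectedness together with the correct edge count (one fewer edge than vertices). Let $G$ and $H$ be trees with $|V(G)| = n_1$ and $|V(H)| = n_2$, so that $|E(G)| = n_1 - 1$ and $|E(H)| = n_2 - 1$. Write $w$ for the single vertex obtained by gluing $u \in V(G)$ and $v \in V(H)$.

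First I would count vertices and edges. The identification merges two vertices into one, so $|V(G \oplus H)| = n_1 + n_2 - 1$, while no edge is lost or created by the gluing, so $|E(G \oplus H)| = (n_1 - 1) + (n_2 - 1) = n_1 + n_2 - 2 = |V(G \oplus H)| - 1$. This is exactly the edge count demanded of a tree on $n_1 + n_2 - 1$ vertices.

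Next I would establish connectedness. Every vertex in the $G$-part is joined to $w$ by a path lying inside (the image of) $G$, since $G$ is connected; likewise every vertex in the $H$-part is joined to $w$ by a path inside $H$. Concatenating two such paths at $w$ connects any prescribed pair of vertices, so $G \oplus H$ is connected. A connected graph whose number of edges is one less than its number of vertices is a tree, which finishes the argument.

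There is no real obstacle here: the lemma is elementary, and the only point requiring any care is the bookkeeping in the vertex count, namely that the identification lowers the total by exactly one while leaving all edges intact. An alternative route would skip the counting and argue acyclicity directly: any cycle contained entirely in one part would contradict that part being a tree, whereas a cycle using edges from both parts would be forced to pass through the cut vertex $w$ more than once, which a cycle cannot do. Either formulation delivers the result.
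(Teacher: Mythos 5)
Your proof is correct. The paper actually states this lemma without any proof at all, treating it as evident, so there is no argument to compare against; your verification (identification merges two vertices into one, so $|V(G\oplus H)|=n_1+n_2-1$ while $|E(G\oplus H)|=n_1+n_2-2$, plus connectedness via concatenating paths through the glued vertex $w$) is a complete and standard way to fill that gap, and your alternative acyclicity argument through the cut vertex $w$ is equally valid.
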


\begin{theorem}\label{thm:series}
Let $G$ and $H$ be two cyclically orderable graphs with $|V(G)|=|V(H)|$ and $ |E(G)|=|E(H)|$. Then $G \oplus H$ is also cyclically orderable.
\end{theorem}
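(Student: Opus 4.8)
The plan is to produce a CBO of $G\oplus H$ by \emph{interleaving} given CBOs of the two factors. Write $n=|V(G)|=|V(H)|$ and $m=|E(G)|=|E(H)|$, so that $|V(G\oplus H)|=2n-1$ and $|E(G\oplus H)|=2m$; hence a spanning tree of $G\oplus H$ has $2n-2$ edges and a progression consists of $2n-2$ cyclically consecutive edges. The conceptual foundation is the observation that, because $G$ and $H$ meet only in the glued vertex, any spanning tree $F$ of $G\oplus H$ splits as $F=F_G\cup F_H$ with $F_G=F\cap E(G)$ and $F_H=F\cap E(H)$, and each of $F_G,F_H$ must be a spanning tree of its factor: $F_G$ has to connect every vertex of $G$ to the glued vertex while staying acyclic, forcing $|F_G|\ge n-1$, and symmetrically $|F_H|\ge n-1$, so $|F_G|+|F_H|=2n-2$ forces equality. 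Conversely, Lemma~\ref{lem:sctree} tells us that gluing a spanning tree of $G$ to a spanning tree of $H$ yields a spanning tree of $G\oplus H$. Thus it suffices to arrange the edges so that every progression contributes a spanning tree to each factor.

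To this end, let $a_1,a_2,\ldots,a_m$ be a CBO of $G$ and $b_1,b_2,\ldots,b_m$ a CBO of $H$, so every $n-1$ cyclically consecutive $a_i$'s (indices mod $m$) induce a spanning tree of $G$, and likewise for the $b_i$'s in $H$. I would then define a cyclic ordering of the $2m$ edges of $G\oplus H$ by alternating the two sequences, placing $a_i$ in position $2i-1$ and $b_i$ in position $2i$ for $i=1,\ldots,m$; that is, the cyclic word $a_1,b_1,a_2,b_2,\ldots,a_m,b_m$. Because both factor orderings are cyclic, this interleaved ordering is genuinely cyclic of period $2m$, with the odd positions reproducing the cyclic ordering of $G$ and the even positions reproducing that of $H$.

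The remaining step, and the one requiring a little care, is to check that every progression (a window of $2n-2$ consecutive positions, read cyclically) splits into exactly $n-1$ of the $a$-edges and $n-1$ of the $b$-edges, with each half a \emph{cyclically consecutive} block in the corresponding factor ordering. Since $2n-2$ is even, a window either begins at an odd position or at an even position, and I would treat these two parities separately. A window starting at position $2i-1$ collects the odd positions as $a_i,a_{i+1},\ldots,a_{i+n-2}$ and the even positions as $b_i,b_{i+1},\ldots,b_{i+n-2}$; a window starting at position $2i$ instead collects $b_i,\ldots,b_{i+n-2}$ together with $a_{i+1},\ldots,a_{i+n-1}$ (all indices mod $m$). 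In either case the $a$-part and the $b$-part are each an $(n-1)$-term cyclically consecutive block, hence spanning trees of $G$ and of $H$ respectively by hypothesis, and Lemma~\ref{lem:sctree} then certifies that their glued union is a spanning tree of $G\oplus H$. The only subtlety is the bookkeeping of the wrap-around and the index shifts in the two parity cases, but no genuine obstacle arises: the alternating interleaving automatically turns one window of length $2n-2$ into a pair of windows of length $n-1$, one in each factor.
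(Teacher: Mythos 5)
Your proposal is correct and follows essentially the same route as the paper: both interleave the two given CBOs (edges of $G$ at odd positions, edges of $H$ at even positions), observe that every window of $2n-2$ consecutive edges decomposes into $n-1$ cyclically consecutive edges from each factor, and invoke Lemma~\ref{lem:sctree} to glue the two resulting spanning trees. Your explicit parity check of the two window types and the remark on the converse decomposition are just more detailed bookkeeping of the same argument.
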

\begin{proof}
Let $\mathcal{O}_G$ and $\mathcal{O}_H$ be a CBO of $G$ and $H$, respectively. Suppose that $|V(G)|=|V(H)|=n$ and $ |E(G)|=|E(H)|=m$. Then $G \oplus H$ has exactly $2n-1$ vertices and $2m$ edges. We define an edge ordering $\mathcal{O}$ of $G \oplus H$ by
\begin{align*}
\mathcal{O}(e) = \begin{cases}
2\mathcal{O}_G(e) -1 & \text{ if } e \in E(G) \\
2\mathcal{O}_H(e) & \text{ if } e \in E(H).
\end{cases}
\end{align*}
Roughly speaking, the edges in $\mathcal{O}$ are alternatively from $\mathcal{O}_G$ and $\mathcal{O}_H$.

To see $\mathcal{O}$ is a CBO of $G\oplus H$, it suffices to show that each progression of $\mathcal{O}$ induces a spanning tree of $G\oplus H$. Let $P$ be a progression of $\mathcal{O}$. Then $P$ contains exactly $2n-2$ edges. These edges come from $n-1$ consecutive edges of $\mathcal{O}_G$ and $n-1$ consecutive edges of $\mathcal{O}_H$, and thus induce a spanning tree $T_1$ of $G$ and a spanning tree $T_2$ of $H$. Clearly $P$ induces $T_1\oplus T_2$. By Lemma~\ref{lem:sctree}, $T_1\oplus T_2$ is a spanning tree of $G\oplus H$, completing the proof.
\end{proof}

With a similar argument, Theorem~\ref{thm:series} can be generalized to series compositions of more graphs.
\begin{theorem}
Let $t\ge 2$ be an integer and $G_1, G_2,\cdots, G_t$ be cyclically orderable graphs with $|V(G_1)|= |V(G_2)|=\cdots=|V(G_t)|$ and $|E(G_1)|= |E(G_2)|=\cdots=|E(G_t)|$. Then $G_1\oplus G_2\oplus\cdots\oplus G_t$ is also cyclically orderable.
\end{theorem}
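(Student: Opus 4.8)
The plan is to imitate the interleaving construction used in Theorem~\ref{thm:series}, but to interleave all $t$ orderings at once rather than to invoke the two-graph result inductively. First I would fix the common parameters, writing $|V(G_i)|=n$ and $|E(G_i)|=m$ for every $i$ and letting $\mathcal{O}_{G_i}$ be a CBO of $G_i$. A direct count shows that $G=G_1\oplus G_2\oplus\cdots\oplus G_t$ has $t(n-1)+1$ vertices (each gluing identifies two vertices, so the $t$ copies of $n$ vertices lose $t-1$) and $tm$ edges, so every progression of $G$ consists of exactly $|V(G)|-1=t(n-1)$ edges. I would then define the edge ordering $\mathcal{O}$ of $G$ by $\mathcal{O}(e)=t\cdot\mathcal{O}_{G_i}(e)-(t-i)$ for $e\in E(G_i)$; note that for $t=2$ this specializes to the formula in Theorem~\ref{thm:series}. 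As $\mathcal{O}_{G_i}(e)$ runs over $\{1,\ldots,m\}$, the values $\mathcal{O}(e)$ run over exactly the labels congruent to $i$ modulo $t$ in $\{1,\ldots,tm\}$ (with $i=t$ giving residue $0$), so $\mathcal{O}$ is a bijection onto $\{1,\ldots,tm\}$ and the labels of $G_i$ form an arithmetic progression of common difference $t$ around the cycle.

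The core step is to analyze an arbitrary progression $P$ of $\mathcal{O}$, i.e.\ a cyclic window of $t(n-1)$ consecutive labels in $\mathbb{Z}/tm\mathbb{Z}$. I would show that $P$ contains exactly $n-1$ edges of each $G_i$, and that for each $i$ these $n-1$ edges are cyclically consecutive under $\mathcal{O}_{G_i}$. Since the labels of $G_i$ are equally spaced with gap $t$, any interval of length $t(n-1)$ meets the residue class of $i$ in exactly $n-1$ positions, and those positions pull back to a consecutive block of edges under $\mathcal{O}_{G_i}$. The one point that genuinely needs care is the wrap-around: when the window straddles the reset from $tm$ back to $1$, I must check that the selected edges of each $G_i$ still form a single cyclically consecutive run in $\mathcal{O}_{G_i}$, and that the count $n-1$ per graph is preserved. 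This cyclic bookkeeping is the main (and essentially the only) obstacle in the argument.

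Once that is in place, the CBO property of each $\mathcal{O}_{G_i}$ guarantees that the $n-1$ selected edges of $G_i$ induce a spanning tree $T_i$ of $G_i$. Because $T_i$ spans $G_i$ it contains the vertex of $G_i$ used in the gluing, so the subgraph of $G$ induced by $P$ is precisely $T_1\oplus T_2\oplus\cdots\oplus T_t$ with the same vertex identifications as in $G$. Iterating Lemma~\ref{lem:sctree} --- $T_1\oplus T_2$ is a tree, then $(T_1\oplus T_2)\oplus T_3$ is a tree, and so on --- shows this subgraph is a tree, and since it meets every vertex of $G$ it is a spanning tree. Hence every progression of $\mathcal{O}$ induces a spanning tree, so $\mathcal{O}$ is a CBO and $G$ is cyclically orderable. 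Finally, I would remark why a shorter induction via Theorem~\ref{thm:series} does \emph{not} work: applying it to $(G_1\oplus\cdots\oplus G_{t-1})\oplus G_t$ would require $|V(G_1\oplus\cdots\oplus G_{t-1})|=|V(G_t)|$ and equal edge counts, but the partial composition is strictly larger than $G_t$, so these hypotheses fail; this is exactly why the simultaneous $t$-fold interleaving is needed.
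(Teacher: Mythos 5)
Your proposal is correct and takes essentially the same approach the paper intends: the paper proves this theorem by remarking that the interleaving construction of Theorem~\ref{thm:series} generalizes ``with a similar argument,'' and your round-robin labelling $\mathcal{O}(e)=t\,\mathcal{O}_{G_i}(e)-(t-i)$ is exactly that generalization, with the wrap-around check and the iterated use of Lemma~\ref{lem:sctree} correctly handled. Your closing remark about why a direct induction via Theorem~\ref{thm:series} fails (the partial composition no longer matches $G_t$ in size) is also accurate.
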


In fact, the above graph can be generalized even more.

\begin{proposition}
For two graphs $G$ and $H$, if $|d(G)|=|d(H)|=d$, then $d(G\oplus H) =d$.
\end{proposition}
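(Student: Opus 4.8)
The plan is to compute the vertex and edge counts of the series composition directly from the $1$-sum operation and then substitute into the definition of density; the equality should fall out as the elementary fact that the mediant of two equal fractions is their common value. First I would fix notation, writing $n_1 = |V(G)|$, $m_1 = |E(G)|$, $n_2 = |V(H)|$, and $m_2 = |E(H)|$. Reading the hypothesis as $d(G) = d(H) = d$ (density is already nonnegative, so the bars are harmless), this says $\frac{m_1}{n_1-1} = \frac{m_2}{n_2-1} = d$, equivalently $m_1 = d(n_1-1)$ and $m_2 = d(n_2-1)$.

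Next I would record how the $1$-sum affects the two parameters. Since $G \oplus H$ is formed by identifying a single vertex of $G$ with a single vertex of $H$, exactly one vertex is absorbed by the gluing while no edges are merged or deleted; hence $|V(G\oplus H)| = n_1 + n_2 - 1$ and $|E(G\oplus H)| = m_1 + m_2$. Applying the definition of density then gives
$$d(G\oplus H) = \frac{m_1 + m_2}{(n_1 + n_2 - 1) - 1} = \frac{m_1 + m_2}{(n_1-1)+(n_2-1)}.$$

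Finally I would substitute $m_1 = d(n_1-1)$ and $m_2 = d(n_2-1)$ into the numerator and factor out $d$:
$$d(G\oplus H) = \frac{d(n_1-1) + d(n_2-1)}{(n_1-1)+(n_2-1)} = d,$$
since the common factor $(n_1-1)+(n_2-1)$ cancels.

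I do not anticipate any genuine obstacle here: the statement is exactly the observation that a fraction whose numerator and denominator are respectively the sums of the numerators and denominators of two equal fractions again equals their shared value. The only point deserving a moment's care is the vertex and edge bookkeeping for the series composition, namely that gluing two vertices into one lowers the vertex count by precisely one while leaving the edge count unchanged; this is immediate from the definition of $G \oplus H$.
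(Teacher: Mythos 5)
Your proof is correct and is essentially identical to the paper's own argument: both compute $|V(G\oplus H)| = n_1+n_2-1$ and $|E(G\oplus H)| = m_1+m_2$, substitute $m_i = d(n_i-1)$, and cancel the common factor $(n_1-1)+(n_2-1)$ in the resulting fraction. Nothing further is needed.
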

\begin{proof}
Suppose that $G$ has $m_1$ edges and $n_1$ vertices, while $H$  has $m_2$ edges and $n_2$ vertices, and  $|d(G)|=|d(H)| =d$. Then $d(G) =m_1/(n_1-1) = d$, and so $m_1 = d(n_1-1)$. Similarly, $m_2 = d(n_2-1)$.

Notice that $G\oplus H$ has exactly $m_1 +m_2$ edges and $n_1 +n_2 -1$ vertices. It follows that 
$$d(G\oplus H) =\frac{m_1 +m_2}{n_1 +n_2 -2} =\frac{d(n_1-1) +d(n_2-1)}{n_1 +n_2 -2} = \frac{d(n_1 +n_2 -2)}{n_1 +n_2 -2} =d.$$
\end{proof}

\begin{theorem}
Let $G$ and $H$ be two cyclically orderable graphs with $|d(G)|=|d(H)|$. Then $G\oplus H$ is also cyclically orderable.
\end{theorem}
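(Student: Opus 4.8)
The plan is to generalize the interleaving idea of Theorem~\ref{thm:series} from strict alternation to a balanced merge. Write $n_1=|V(G)|$, $m_1=|E(G)|$, $n_2=|V(H)|$, $m_2=|E(H)|$, and let $d=d(G)=d(H)$, so that $m_1=d(n_1-1)$ and $m_2=d(n_2-1)$. By the preceding proposition $d(G\oplus H)=d$; moreover $G\oplus H$ has $N:=m_1+m_2$ edges and $n_1+n_2-1$ vertices, so each progression consists of $L:=n_1+n_2-2$ edges. The goal is to merge a CBO $\mathcal{O}_G$ of $G$ and a CBO $\mathcal{O}_H$ of $H$ into a single cyclic ordering of the $N$ edges of $G\oplus H$ in which every progression contains exactly $n_1-1$ cyclically consecutive edges of $\mathcal{O}_G$ together with exactly $n_2-1$ cyclically consecutive edges of $\mathcal{O}_H$. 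When $n_1=n_2$ and $m_1=m_2$ this is precisely the alternation $GHGH\cdots$ of Theorem~\ref{thm:series}; in general the two CBOs have different lengths, and the crux is to interleave them so that the edges of each factor stay evenly spread around the cycle.

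First I would decide, for each of the $N$ cyclic slots, whether it holds an edge of $G$ or of $H$ using a balanced (Beatty-type) word: label slot $i\in\{1,\dots,N\}$ with type $G$ if $\lfloor i\,m_1/N\rfloor\neq\lfloor (i-1)m_1/N\rfloor$, and with type $H$ otherwise. This produces exactly $m_1$ slots of type $G$ and $m_2$ of type $H$. I would then fill the $G$-slots, read in increasing cyclic order, with the edges $\mathcal{O}_G^{-1}(1),\mathcal{O}_G^{-1}(2),\dots$ in that order, and fill the $H$-slots with the edges of $H$ in the order given by $\mathcal{O}_H$. Since the factor-$G$ edges are inserted into the $G$-slots following $\mathcal{O}_G$, every cyclic arc of slots meets the $G$-edges in a block that is cyclically consecutive in $\mathcal{O}_G$, and likewise for $H$; so the consecutiveness requirement is automatic once the slot counts come out right.

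The main step is to verify the slot count in an arbitrary progression, and this is where the equal-density hypothesis does the work. Setting $f(x)=\lfloor x\,m_1/N\rfloor$, the number of $G$-slots among slots $j+1,\dots,j+L$ equals $f(j+L)-f(j)$; since $L\,m_1/N=m_1(n_1+n_2-2)/(m_1+m_2)=m_1/d=n_1-1$ is an \emph{integer}, one gets $f(j+L)=f(j)+(n_1-1)$, so every length-$L$ window has exactly $n_1-1$ slots of type $G$, where a short separate check handles windows that wrap around using $f(N)=m_1$. Consequently each progression restricts to $n_1-1$ cyclically consecutive edges of $\mathcal{O}_G$, forming a spanning tree $T_1$ of $G$, and to $n_2-1$ cyclically consecutive edges of $\mathcal{O}_H$, forming a spanning tree $T_2$ of $H$; the progression therefore induces $T_1\oplus T_2$, which is a spanning tree of $G\oplus H$ by Lemma~\ref{lem:sctree}. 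I expect the only real obstacle to be showing the balanced word keeps both factors evenly distributed, and this rests entirely on the integrality of $L\,m_1/N=n_1-1$, which fails without the assumption $d(G)=d(H)$. The degenerate case $d=1$, where $G\oplus H$ is itself a tree and $L=N$ so that the lone progression is all of $E(G\oplus H)$, should be recorded separately as trivial.
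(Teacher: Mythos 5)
Your proof is correct, and it shares the paper's overall skeleton: merge the two CBOs into one cyclic ordering that preserves each factor's internal cyclic order, show that every progression splits into $n_1-1$ consecutive edges of $\mathcal{O}_G$ and $n_2-1$ consecutive edges of $\mathcal{O}_H$, and finish with Lemma~\ref{lem:sctree}. Where you genuinely differ is the construction of the merge and the balance verification. The paper writes $d=s/t$ in lowest terms, deduces $n_1-1=tp_1$, $m_1=sp_1$, $n_2-1=tp_2$, $m_2=sp_2$, cuts $\mathcal{O}_G$ into $s$ consecutive blocks $P_1,\dots,P_s$ of size $p_1$ and $\mathcal{O}_H$ into $s$ blocks $Q_1,\dots,Q_s$ of size $p_2$, and takes $\mathcal{O}=P_1,Q_1,P_2,Q_2,\dots,P_s,Q_s$; exact balance of each window then comes from counting whole and partial blocks. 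You instead assign slots by the Beatty word determined by $\lfloor i\,m_1/N\rfloor$ and prove balance by telescoping, $f(j+L)-f(j)=L\,m_1/N=n_1-1$, the integrality of $L\,m_1/N$ being precisely where equal density enters (the counterpart of the paper's $t\mid(n_1-1)$ and $t\mid(n_2-1)$). The two orderings are genuinely different cyclic words in general: yours spreads the $G$-edges as evenly as possible, while the paper's groups them into runs of length $p_1$; both satisfy the only property that matters. Your floor-function argument buys uniformity — no lowest-terms bookkeeping and no case analysis on where a window starts or wraps — whereas the paper's block decomposition is more explicit and elementary, making the interleaving visible at a glance (and reducing to the alternation of Theorem~\ref{thm:series} when $p_1=p_2$ and $s=m_1$). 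One small note: your separate treatment of $d=1$ is unnecessary, since the general argument covers it (the unique progression is then all of $E(G\oplus H)$), but it is harmless.
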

\begin{proof}
Suppose that $G$ has $m_1$ edges and $n_1$ vertices, while $H$  has $m_2$ edges and $n_2$ vertices, and  $|d(G)|=|d(H)| =d$. Let $d = s/t$ for integers $s, t$ with $\gcd(s,t)=1$. 

Then $$d(G) =\frac{m_1}{n_1-1} = \frac{s}{t},$$
and so $m_1 t = s(n_1-1)$, which implies that $t\mid s(n_1-1)$. Since $\gcd(s,t)=1$, it follows that $t\mid (n_1-1)$. Let $n_1-1 = t p_1$. Then $m_1 = s\cdot \frac{n_1-1}{t} = s p_1$.

Similarly, $m_2 t = s(n_2-1)$ and $t\mid (n_2-1)$. Let $n_2 -1 = t p_2$. Then $m_2 = s\cdot \frac{n_2 -1}{t} = s p_2$.

Let $\mathcal{O}_G$ and $\mathcal{O}_H$ be a CBO of $G$ and $H$, respectively. Since $m_1 = s p_1$, $\mathcal{O}_G$ can be divided into $s$ ordered subsets $P_1,P_2,\cdots,P_s$, each with size $p_1$. That is 
$$\mathcal{O}_G = P_1, P_2,\cdots, P_s.$$
Similarly, since $m_2 = s p_2$, $\mathcal{O}_H$ can be divided into $s$ ordered subsets $Q_1,Q_2,\cdots,Q_s$, each with size $p_2$. That is 
$$\mathcal{O}_H = Q_1, Q_2,\cdots, Q_s.$$

Now we define an edge ordering $\mathcal{O}$ for $G\oplus H$ by
$$\mathcal{O} = P_1, Q_1, P_2, Q_2,\cdots, P_s, Q_s.$$

Notice that $G\oplus H$ has exactly $m_1 +m_2$ edges and $n_1 +n_2 -1$ vertices. Any progression of $\mathcal{O}$ has exactly $n_1 +n_2 -2$ edges. Also notice that $n_1 +n_2 -2 = n_1 -1 + n_2 -1 = t p_1 + t p_2$, and edges in $\mathcal{O}$ are alternately from $P_i$ and $Q_i$. It follows that any progression of $\mathcal{O}$ must have exactly $n_1-1$ consecutive edges (inducing a spanning tree $T_1$ of $G$) from $\mathcal{O}_G$ and exactly $n_2-1$ consecutive edges (inducing a spanning tree $T_2$ of $H$) from $\mathcal{O}_H$. By Lemma~\ref{lem:sctree}, $T_1\oplus T_2$ is a spanning tree of $G\oplus H$, completing the proof.
\end{proof}

\begin{corollary}
Let $t\ge 2$ be an integer and $G_1, G_2,\cdots, G_t$ be cyclically orderable graphs with $d(G_1)= d(G_2)=\cdots=d(G_t)$. Then $G_1\oplus G_2\oplus\cdots\oplus G_t$ is also cyclically orderable.
\end{corollary}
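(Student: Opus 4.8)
The plan is to proceed by induction on $t$, taking the preceding theorem (the two-graph case, that cyclically orderable $G$ and $H$ with $d(G)=d(H)$ yield a cyclically orderable $G\oplus H$) as the base case, and using the preceding proposition on the density of a series composition to drive the inductive step. For $t=2$ the statement is exactly that theorem, so there is nothing to prove.

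For the inductive step, I would assume the result holds for any $t-1$ cyclically orderable graphs of common density. Given $G_1,G_2,\cdots,G_t$ with $d(G_1)=d(G_2)=\cdots=d(G_t)=d$, set $G'=G_1\oplus G_2\oplus\cdots\oplus G_{t-1}$, regarded as an iterated series composition chosen so that $G_1\oplus\cdots\oplus G_t=G'\oplus G_t$ for the intended gluing vertices. By the induction hypothesis, $G'$ is cyclically orderable.

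Next I would verify that $d(G')=d$. The preceding proposition supplies this for a single 1-sum; applying it repeatedly (formally, by a second short induction on the number of summands) shows that a series composition of any number of graphs all having density $d$ again has density $d$. Hence $d(G')=d=d(G_t)$, and both $G'$ and $G_t$ are cyclically orderable. Applying the preceding two-graph theorem to the pair $G'$ and $G_t$ then shows that $G'\oplus G_t=G_1\oplus\cdots\oplus G_t$ is cyclically orderable, completing the induction.

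The only genuinely delicate point is the bookkeeping for $t>2$: I must ensure that $G_1\oplus\cdots\oplus G_t$ can legitimately be regrouped as $(G_1\oplus\cdots\oplus G_{t-1})\oplus G_t$, that is, that peeling off the last summand still leaves a well-defined iterated series composition of the remaining graphs glued at the intended vertices. Since each series composition identifies exactly one vertex of the new summand with one vertex of the existing graph, this regrouping is immediate from the definition, and the iterated density computation is routine; I do not anticipate any substantive obstacle beyond this organizational care.
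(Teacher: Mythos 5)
Your proof is correct and is precisely the argument the paper intends: the corollary is stated without proof because it follows by induction on $t$, using the preceding proposition to see that the partial composition $G_1\oplus\cdots\oplus G_{t-1}$ still has density $d$, and the preceding theorem to glue on $G_t$. Your handling of the regrouping $(G_1\oplus\cdots\oplus G_{t-1})\oplus G_t$ is the right bookkeeping and poses no obstacle, since each 1-sum attaches one new summand at a single vertex.
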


Thus friendship graphs and windmill graphs are cyclically orderable.

\section{Generalized theta graphs}

\begin{definition}
A generalized theta graph, denoted by $\Theta_{l_1, l_2,\ldots, l_k}$ is a graph obtained by joining two distinct vertices through $k$ disjoint paths of lengths $l_1,l_2,\ldots,l_k$ with $l_1 \le l_2 \le \cdots \le l_k$.
\end{definition}

Here is an example of generalized theta graphs.
\begin{figure}[htb]
    \centering
    \begin{tikzpicture}[every node/.style={circle,thick,draw},scale=1.6] 
        \begin{scope}
            \node[fill=lightgray] (1) at (0,1) {};
            \node (2) at (1, 2) {};
            \node (4) at (3, 2) {};
            \node[fill=lightgray] (5) at (4, 1) {};
            
            \node (6) at (0,1) {};
            \node (7) at (1, 1) {};
            \node (8) at (2, 1) {};
            \node (9) at (3, 1) {};
            \node (10) at (4, 1) {};
            
            \node (11) at (0,1) {};
            \node (12) at (1, 0) {};
            \node (13) at (2, 0) {};
            \node (14) at (3, 0) {};
            \node (15) at (4, 1) {};
        \begin{scope}[>={},every node/.style={fill=white,circle,inner sep=0pt,minimum size=10pt}]
            \path [] (1) edge   (2);
            \path [] (2) edge   (4);
            \path [] (4) edge   (5);
            \path [] (6) edge   (7);
            \path [] (7) edge   (8);
            \path [] (8) edge   (9);
            \path [] (9) edge   (10);
            \path [] (11) edge   (12);
            \path [] (12) edge   (13);
            \path [] (13) edge   (14);
            \path [] (14) edge   (15);

            \end{scope}
        \end{scope}
    \end{tikzpicture}
    \caption{The generalized theta graph $\Theta_{3,4,4}$}
\end{figure}
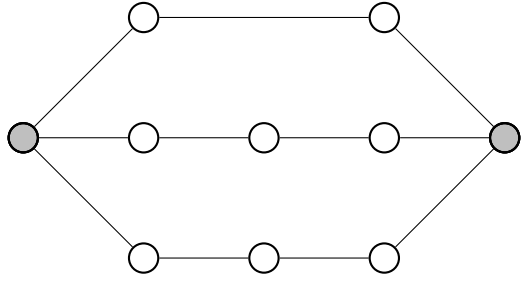

\begin{lemma}\label{lem:theta}
Let $G$ be the generalized theta graph $\Theta_{l_1,l_2,\ldots,l_k}$. Then
$$|E(G)| = l_1 +l_2 +\cdots + l_k = \sum_{i=1}^{k} l_i$$ and $$|V(G)| =\sum_{i=1}^{k} (l_i-1) + 2 = \sum_{i=1}^{k} l_i - k + 2.$$
\end{lemma}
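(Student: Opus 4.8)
The plan is to prove both formulas by direct counting, partitioning the vertex and edge sets of $\Theta_{l_1,l_2,\ldots,l_k}$ according to the $k$ paths that define it. Let $u$ and $v$ denote the two distinguished vertices that all paths join, and for each $i$ let $P_i$ be the $i$-th path from $u$ to $v$, which has length $l_i$. By definition these paths are internally disjoint: they pairwise meet only at the endpoints $u$ and $v$.

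First I would count the edges. A path of length $l_i$ consists of exactly $l_i$ edges, and since the $P_i$ share no edges (they meet only at the vertices $u,v$, not along any common edge), the edge set $E(G)$ is the disjoint union $E(P_1)\cup\cdots\cup E(P_k)$. Summing the sizes gives $|E(G)|=\sum_{i=1}^{k} l_i$ at once.

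Next I would count the vertices. Each $P_i$ has $l_i+1$ vertices in total, namely the two endpoints $u,v$ together with $l_i-1$ internal vertices. The internal vertices of distinct paths are distinct (again by internal disjointness), so $V(G)$ decomposes as the two shared endpoints together with the disjoint collection of all internal vertices. Hence $|V(G)| = 2 + \sum_{i=1}^{k}(l_i-1) = \sum_{i=1}^{k} l_i - k + 2$, which is the second claimed identity.

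The only point requiring care --- and the nearest thing to an obstacle --- is the disjointness bookkeeping: one must confirm that no edge lies on two of the paths and that no internal vertex is shared, so that the unions above are genuinely disjoint and nothing is double-counted. Both facts follow immediately from the hypothesis that the $k$ paths are internally disjoint, so once that is invoked the computation is routine. If one prefers to avoid appealing to disjointness wholesale, an alternative is to induct on $k$: the base case $k=1$ is a single $u$--$v$ path with $l_1$ edges and $l_1+1$ vertices, and adjoining one further internally disjoint path of length $l_{k+1}$ contributes exactly $l_{k+1}$ new edges and $l_{k+1}-1$ new internal vertices, which reproduces both recursions and hence both closed forms.
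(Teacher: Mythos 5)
Your proof is correct: the paper states Lemma~\ref{lem:theta} without any proof at all, treating it as immediate, and your direct count---edges partitioned among the $k$ internally disjoint paths, vertices as the two endpoints plus the $l_i-1$ internal vertices of each path---is exactly the routine argument the paper implicitly relies on. Nothing is missing; the disjointness bookkeeping you flag is the whole content of the lemma, and you handle it correctly.
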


Not all generalized theta graphs are cyclically orderable. Here is a necessary condition.
\begin{theorem}
If the generalized theta graph $\Theta_{l_1, l_2,\ldots,l_k}$ is cyclically orderable, then
$$\frac{\sum_{i=1}^{t} l_i}{t-1} \ge \frac{\sum_{i=1}^{k} l_i}{k-1},$$
for any integer t with $2\le t\le k$.
\end{theorem}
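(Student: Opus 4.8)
The approach is to obtain the inequality as a direct consequence of the necessity half of the result of Kajitani, Ueno and Miyano: if $\Theta_{l_1,\ldots,l_k}$ is cyclically orderable, then it is uniformly dense, so every connected subgraph $H$ satisfies $d(H) \le d(\Theta_{l_1,\ldots,l_k})$. (If one prefers a self-contained derivation of this fact, it follows by double counting the incidences between the $|E(G)|$ progressions of a CBO and the edges of a connected subgraph $H$: each edge of $H$ lies in exactly $|V(G)|-1$ progressions, while each progression is a spanning tree of $G$ and hence meets $E(H)$ in a forest of at most $|V(H)|-1$ edges, giving $|E(H)|(|V(G)|-1) \le |E(G)|(|V(H)|-1)$, i.e. $d(H)\le d(G)$.) The entire proof then reduces to choosing the right subgraph and simplifying.

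The subgraph I would use is $H = \Theta_{l_1,\ldots,l_t}$, obtained from the two pole vertices together with only the $t$ shortest of the $k$ connecting paths. Because $t \ge 2$, this $H$ already contains two internally disjoint paths joining the poles and is therefore a connected subgraph of $G = \Theta_{l_1,\ldots,l_k}$; thus uniform density applies and yields $d(H) \le d(G)$. Writing $S_t = \sum_{i=1}^{t} l_i$ and $S_k = \sum_{i=1}^{k} l_i$, Lemma~\ref{lem:theta} gives $|E(H)| = S_t$, $|V(H)| - 1 = S_t - t + 1$, $|E(G)| = S_k$, and $|V(G)| - 1 = S_k - k + 1$, so the inequality $d(H) \le d(G)$ becomes
$$\frac{S_t}{S_t - t + 1} \;\le\; \frac{S_k}{S_k - k + 1}.$$

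It remains only to recast this in the stated form. Each denominator equals $\sum_i (l_i - 1) + 1 \ge 1 > 0$, so cross-multiplication is legitimate; expanding both sides and cancelling the common product $S_t S_k$ leaves $S_t(k-1) \ge S_k(t-1)$. Dividing by $(t-1)(k-1)$, which is strictly positive exactly because $t \ge 2$ and $k \ge t \ge 2$, produces $\tfrac{S_t}{t-1} \ge \tfrac{S_k}{k-1}$, as claimed.

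I do not anticipate a real obstacle here; the argument is essentially bookkeeping once uniform density is in hand. The one genuine decision is recognizing that the sub-theta-graph on the $t$ shortest paths is the correct witness for uniform density, and the only points needing care are confirming that $H$ is connected (guaranteed by $t \ge 2$) and tracking signs so that the inequality is not accidentally reversed during the cross-multiplication and the final division, both of which are safe because all denominators are strictly positive.
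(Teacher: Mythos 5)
Your proposal is correct and follows essentially the same route as the paper: invoke the necessity direction of the Kajitani--Ueno--Miyano result to get uniform density, take the sub-theta-graph $\Theta_{l_1,\ldots,l_t}$ on the $t$ shortest paths as the witness subgraph, apply Lemma~\ref{lem:theta} for the edge and vertex counts, and simplify $d(H)\le d(G)$ algebraically (the paper rewrites the densities as reciprocals where you cross-multiply, but this is cosmetic). Your parenthetical double-counting derivation of the necessity is a nice self-contained bonus that the paper simply cites, but it does not change the underlying argument.
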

\begin{proof}
Let $G$ be the generalized theta graph $\Theta_{l_1, l_2, \ldots, l_k}$ and $H$ be a subgraph of $G$ consisting of $t$ disjoint paths of length $l_1, l_2,\ldots,l_t$. If $G$ is cyclically orderable, then G is uniformly dense, and so $d(H)\le d(G)$.

By Lemma~\ref{lem:theta}, we have
$$d(G)= \frac{|E{G}|}{|V(G)|-1} =\frac{\sum_{i=1}^{k} l_i}{\sum_{i=1}^{k} l_i - k + 1} = \frac{1}{1-\frac{k-1}{\sum_{i=1}^{k} l_i}}.$$
Similarly, we have
$$d(H)= \frac{|E{G}|}{|V(G)|-1} =\frac{\sum_{i=1}^{t} l_i}{\sum_{i=1}^{t} l_i - t + 1} = \frac{1}{1-\frac{t-1}{\sum_{i=1}^{t} l_i}}.$$

Since $d(H)\le d(G)$, it follows that for any $2\le t\le k$,
$$\frac{\sum_{i=1}^{t} l_i}{t-1} \ge \frac{\sum_{i=1}^{k} l_i}{k-1}.$$    
\end{proof}

For instance, $\Theta_{1,2,5}$ is not cyclically orderable.
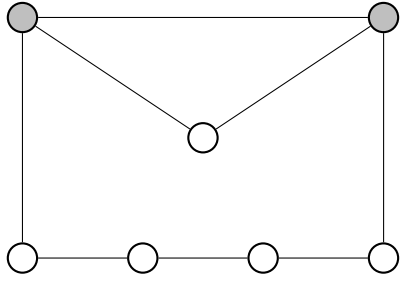
\begin{figure}[htb]
    \centering
    \begin{tikzpicture}[every node/.style={circle,thick,draw},scale=1.6] 
        \begin{scope}
            \node[fill=lightgray] (1) at (0,2) {};
            \node[fill=lightgray] (2) at (3, 2) {};
            \node (3) at (1.5, 1) {};
            \node (4) at (0, 0) {};
            \node (5) at (1, 0) {};
            
            \node (6) at (2,0) {};
            \node (7) at (3,0) {};
        \begin{scope}[>={},every node/.style={fill=white,circle,inner sep=0pt,minimum size=10pt}]
            \path [] (1) edge  (2);
            \path [] (1) edge  (3);
            \path [] (2) edge  (3);
            \path [] (1) edge  (4);
            \path [] (4) edge  (5);
            \path [] (5) edge  (6);
            \path [] (6) edge  (7);
            \path [] (7) edge  (2);

            \end{scope}
        \end{scope}
    \end{tikzpicture}
    \caption{The generalized theta graph $\Theta_{1,2,5}$}
\end{figure}

\begin{theorem}
If $l_1 =l_2 =\cdots =l_k$, then $\Theta_{l_1, l_2, \ldots, l_k}$ is cyclically orderable.
\end{theorem}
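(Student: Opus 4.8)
The plan is to reduce the problem to a purely combinatorial coloring condition by way of a structural lemma describing which edge deletions leave a spanning tree. Write $l_1=\cdots=l_k=l$. By Lemma~\ref{lem:theta}, $|E(G)|=kl$ and $|V(G)|-1=kl-k+1$, so each progression consists of $kl-k+1$ edges and its cyclic complement is a window of exactly $k-1$ consecutive edges. Since $|E(G)|-(|V(G)|-1)=k-1$, deleting these $k-1$ complementary edges yields a tree precisely when the remaining graph stays connected. Hence a progression induces a spanning tree if and only if its complementary $k-1$ edges can be removed from $G$ without disconnecting it.

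First I would establish the key structural fact: for the $k$-path theta graph, deleting a set $S$ of $k-1$ edges leaves a connected graph (hence a spanning tree) if and only if $S$ contains at most one edge from each of the $k$ paths. For sufficiency, note that $|S|=k-1<k$ forces at least one path to remain intact, joining the two hubs $u,v$; every other path loses only a single edge, so both resulting pieces still attach to $u$ or $v$, and the whole graph stays connected. For necessity, if $S$ contains two edges $e_{j,a},e_{j,b}$ (with $a<b$) of a common path $P_j$, then the internal segment of $P_j$ strictly between them consists of degree-$2$ vertices with no other attachments and becomes a separate component. Crucially, this criterion depends only on \emph{which} paths the deleted edges lie on, not on which specific edges they are; this is exactly what will let the ordering within each path be arbitrary.

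With the lemma in hand, the construction is immediate. Color each edge by the index of the path containing it, producing $l$ edges of each of the $k$ colors, and arrange all $kl$ edges cyclically so that the color sequence is the periodic pattern $1,2,\ldots,k$ repeated $l$ times; concretely, place the $i$-th edge of path $j$ in position $(i-1)k+j$. Because $kl$ is a multiple of $k$, this pattern is perfectly periodic around the cycle, so every window of $k$ consecutive positions contains each color exactly once, and therefore every window of $k-1$ consecutive positions carries $k-1$ pairwise distinct colors. Applying the structural lemma to each complementary window then shows that every progression is a spanning tree, so the ordering $\mathcal{O}$ is a CBO and $\Theta_{l,l,\ldots,l}$ is cyclically orderable.

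The only genuinely delicate point is the necessity direction of the structural lemma: verifying that two deletions on a common path really do isolate an internal component, and checking the boundary indices $a=1$ and $b=l$ so that the argument also covers the degenerate case $l=1$ (where $G$ is a multigraph of $k$ parallel edges) uniformly. Everything else, in particular the periodicity of the color pattern and the resulting distinctness within each short window, is routine.
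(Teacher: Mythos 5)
Your proposal is correct and uses exactly the same ordering as the paper: placing the $i$-th edge of path $j$ at position $(i-1)k+j$ is the paper's $\mathcal{O}(e_{ij})=(j-1)k+i$ up to renaming indices. The only difference is that the paper dismisses the verification with ``it is not hard to verify,'' whereas you supply it explicitly via the complementary-window argument (a progression is a spanning tree iff the $k-1$ deleted edges lie on pairwise distinct paths), which is a welcome completion rather than a different approach.
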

\begin{proof}
Suppose that $l_1 =l_2 =\cdots =l_k =l$. Then $\Theta_{l_1, l_2, \ldots, l_k}$ has $kl$ edges and $kl - k+2$ vertices. Suppose the $k$ disjoint paths are $P_1, P_2,\ldots, P_k$. For $i=1,2,\cdots,k$, suppose the edges on the path $P_i$ are $e_{i1}, e_{i2},\ldots, e_{il}$.

We can define an edge ordering $\mathcal{O}$ for $\Theta_{l_1, l_2, \ldots, l_k}$ by $$\mathcal{O}(e_{ij}) = (j-1)k +i,$$ where $1\le i\le k$ and $1\le j\le l$. Here is an example when $k=3$ and $l=5$.

It is not hard to verify $\mathcal{O}$ is a cyclic base ordering of $\Theta_{l_1, l_2, \ldots, l_k}$.

\begin{figure}[htb]
    \centering
    \begin{tikzpicture}[every node/.style={circle,thick,draw},scale=2] 
        \begin{scope}
            \node[fill=lightgray] (1) at (0,1) {};
            \node (2) at (1, 2) {};
            \node (3) at (2, 2) {};
            \node (4a) at (3, 2) {};
            \node (4) at (4, 2) {};
            \node[fill=lightgray] (5) at (5, 1) {};
            
            \node (6) at (0,1) {};
            \node (7) at (1, 1) {};
            \node (8) at (2, 1) {};
            \node (9a) at (3, 1) {};
            \node (9) at (4, 1) {};
            \node (10) at (5, 1) {};
            
            \node (11) at (0,1) {};
            \node (12) at (1, 0) {};
            \node (13) at (2, 0) {};
            \node (14a) at (3, 0) {};
            \node (14) at (4, 0) {};
            \node (15) at (5, 1) {};
        \begin{scope}[>={},every node/.style={fill=white,circle,inner sep=0pt,minimum size=10pt}]
            \path [] (1) edge node {1} (2);
            \path [] (2) edge node {4} (3);
            \path [] (3) edge node {7} (4a);
            \path [] (4a) edge node {10} (4);
            \path [] (4) edge node {13} (5);
            \path [] (6) edge node {2} (7);
            \path [] (7) edge node {5} (8);
            \path [] (8) edge node {8}(9a);
            \path [] (9a) edge node {11} (9);
            \path [] (9) edge node {14} (10);
            \path [] (11) edge node {3} (12);
            \path [] (12) edge node {6} (13);
            \path [] (13) edge node {9} (14a);
            \path [] (14a) edge node {12} (14);
            \path [] (14) edge node {15} (15);

            \end{scope}
        \end{scope}
    \end{tikzpicture}
    \caption{CBO of $\Theta_{5,5,5}$}
\end{figure}
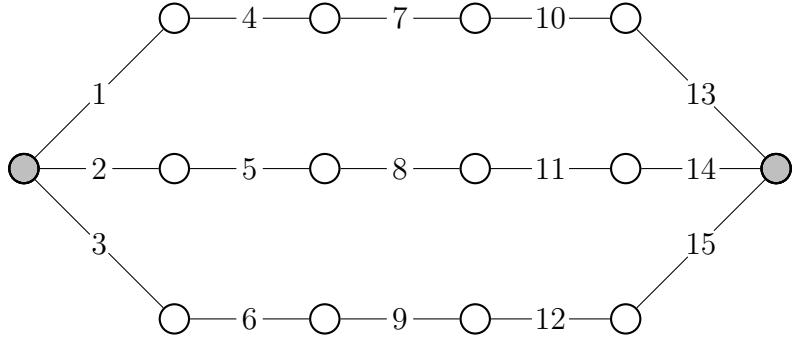
\end{proof}

\section{Circulant graphs}

\begin{definition}
Let integers $x_1,x_2,\ldots, x_t < n/2$. A {\bf{circulant graph}} on $n$ vertices, denoted by $Ci_{n}(x_1, x_2, \ldots, x_t)$, is a graph with the vertex set $\{v_1,v_2,\ldots, v_n\}$ and each vertex $v_i$ is joined to $v_j$ where $j= i+ x_k$ (the subscripts are equivalent modulo $k$ if $j>k$), for every $k=1,2,\cdots,t$.
\end{definition}

Here are two examples.
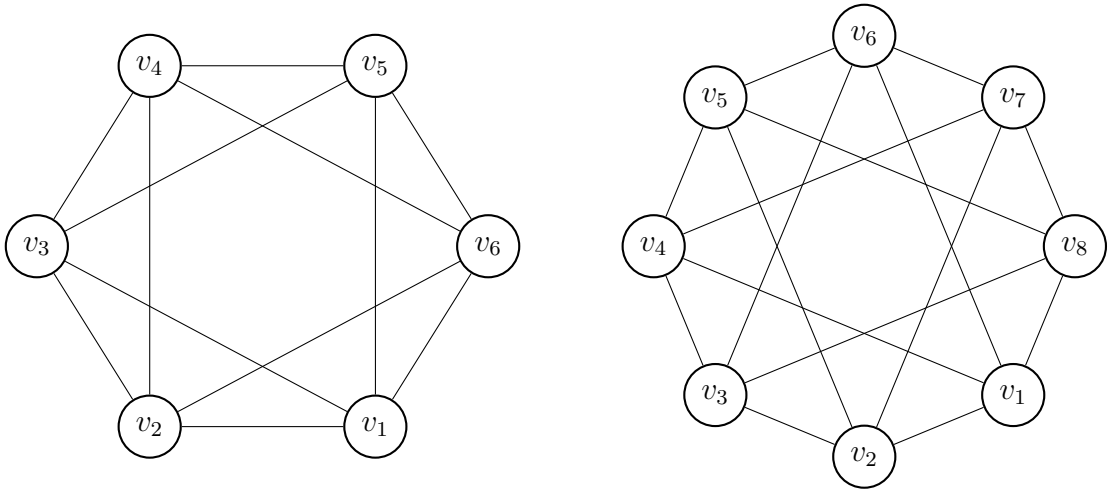
\begin{figure}[htb]
\centering{
\begin{tikzpicture}[every node/.style={circle,thick,draw}] 

\begin{scope}[scale=1.2]
\node (1) at (3.75, 0.5) {$v_1$};
\node (2) at (1.25, 0.5) {$v_2$};
\node (3) at (0, 2.5) {$v_3$};
\node (4) at (1.25, 4.5) {$v_4$};
\node (5) at (3.75, 4.5) {$v_5$};
\node (6) at (5, 2.5) {$v_6$};
\begin{scope}[>={},every node/.style={fill=white,circle,inner sep=0pt,minimum size=12pt}]
  \path [] (1) edge (3);
  \path [] (4) edge (5);
  \path [] (2) edge (4);
  \path [] (5) edge (6);
  \path [] (3) edge (5);
  \path [] (1) edge (6);
  \path [] (4) edge (6);
  \path [] (1) edge (2);
  \path [] (1) edge (5);
  \path [] (2) edge (3);
  \path [] (2) edge (6);
  \path [] (3) edge (4);
\end{scope}
\end{scope}

\begin{scope}[xshift=11cm,yshift=3cm,scale=1.4]
  \foreach \i in {1,2,3,4,5,6,7,8} {
    \node (\i) at (-360/8 *\i:2cm) [circle,thick,draw,fill=white] {$v_{\i}$};
  }
  \begin{scope}[>={},every node/.style={fill=white,circle,inner sep=0pt,minimum size=12pt}]
    \path (1) edge (2);
    \path (2) edge (3);
    \path (3) edge (4);
    \path (4) edge (5);
    \path (5) edge (6);
    \path (6) edge (7);
    \path (7) edge (8);
    \path (8) edge (1);
    
    \path (1) edge (4);
    \path (2) edge (5);
    \path (3) edge (6);
    \path (4) edge (7);
    \path (5) edge (8);
    \path (6) edge (1);
    \path (7) edge (2);
    \path (8) edge (3);
  \end{scope}
\end{scope}
\end{tikzpicture}
}
\caption{$Ci_6(1,2)$ and $Ci_8(1,3)$}
\end{figure}

The graph $Ci_n(1,2)$ is actually {\bf the square} of cycle $C_n$, which is also denoted by $C_n^2$. 
The CBOs of the circulant graphs $Ci_n(1,2)$ and $Ci_n(1,3)$ have been previously studied in \cite{LYZ21} and \cite{Xiao22}, respectively.

Here are some example of $Ci_n(1,4)$.

\begin{figure}[htb!]
\centering{
\begin{tikzpicture}[scale=1.8]
\begin{scope}
  \foreach \i in {1,2,3,4,5,6,7,8,9} {
    \node (\i) at (-360/9 *\i:2cm) [circle,thick,draw,fill=white] {$v_{\i}$};
  }
  \begin{scope}[>={},every node/.style={fill=white,circle,inner sep=0pt,minimum size=12pt}]
    \path (1) edge node {10} (2);
    \path (2) edge node {12} (3);
    \path (3) edge node {14} (4);
    \path (4) edge node {16} (5);
    \path (5) edge node {18} (6);
    \path (6) edge node {2} (7);
    \path (7) edge node {4} (8);
    \path (8) edge node {6} (9);
    \path (9) edge node {8} (1);
    
    \path (1) edge[bend left=25] node {1} (5);
    \path (2) edge[bend left=25] node {3} (6);
    \path (3) edge[bend left=25] node {5} (7);
    \path (4) edge[bend left=25] node {7} (8);
    \path (5) edge[bend left=25] node {9} (9);
    \path (6) edge[bend left=25] node {11} (1);
    \path (7) edge[bend left=25] node {13} (2);
    \path (8) edge[bend left=25] node {15} (3);
    \path (9) edge[bend left=25] node {17} (4);
  \end{scope}
  \end{scope}
\end{tikzpicture}
}
\caption{$Ci_9(1,4)$}
\end{figure}
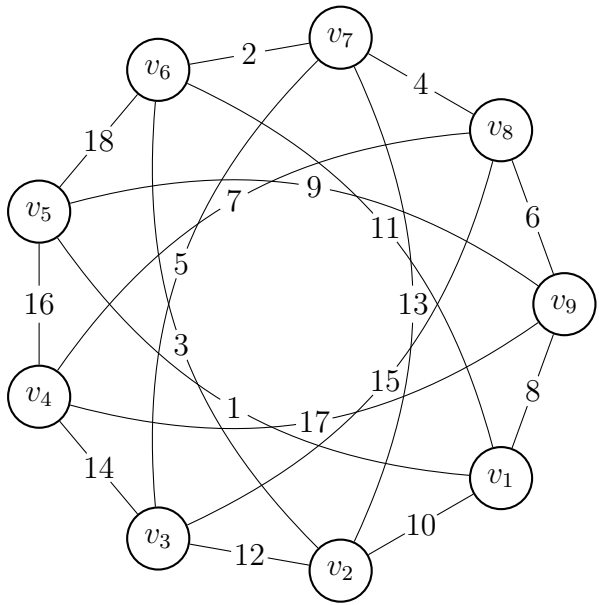

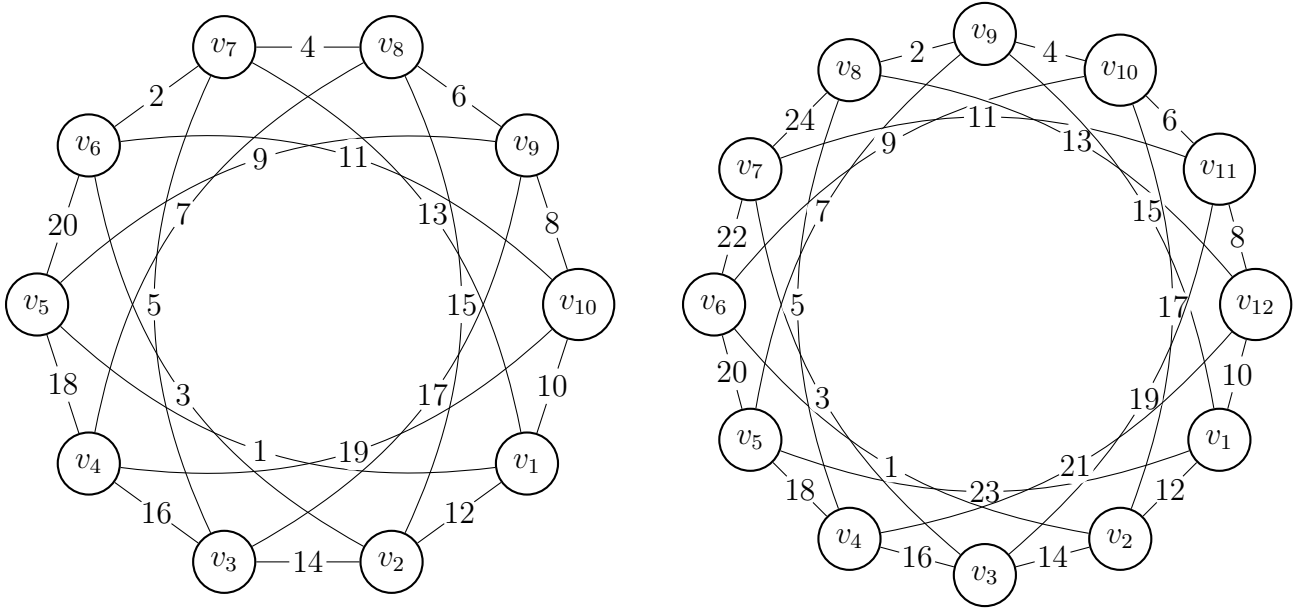
\begin{figure}[htb!]
\centering{
\begin{tikzpicture}[scale=1.8]
\begin{scope}G
  \foreach \i in {1,2,3,4,5,6,7,8,9,10} {
    \node (\i) at (-360/10 *\i:2cm) [circle,thick,draw,fill=white] {$v_{\i}$};
  }
  \begin{scope}[>={},every node/.style={fill=white,circle,inner sep=0pt,minimum size=12pt}]
    \path (1) edge node {12} (2);
    \path (2) edge node {14} (3);
    \path (3) edge node {16} (4);
    \path (4) edge node {18} (5);
    \path (5) edge node {20} (6);
    \path (6) edge node {2} (7);
    \path (7) edge node {4} (8);
    \path (8) edge node {6} (9);
    \path (9) edge node {8} (10);
    \path (10) edge node {10} (1);
    
    \path (1) edge[bend left=25] node {1} (5);
    \path (2) edge[bend left=25] node {3} (6);
    \path (3) edge[bend left=25] node {5} (7);
    \path (4) edge[bend left=25] node {7} (8);
    \path (5) edge[bend left=25] node {9} (9);
    \path (6) edge[bend left=25] node {11} (10);
    \path (7) edge[bend left=25] node {13} (1);
    \path (8) edge[bend left=25] node {15} (2);
    \path (9) edge[bend left=25] node {17} (3);
    \path (10) edge[bend left=25] node {19} (4);
  \end{scope}
  \end{scope}
  \begin{scope}[xshift=5cm, scale=1]
  \foreach \i in {1,2,3,4,5,6,7,8,9,10,11,12} {
    \node (\i) at (-360/12 *\i:2cm) [circle,thick,draw,fill=white] {$v_{\i}$};
  }
  \begin{scope}[>={},every node/.style={fill=white,circle,inner sep=0pt,minimum size=12pt}]
    \path (1) edge node {12} (2);
    \path (2) edge node {14} (3);
    \path (3) edge node {16} (4);
    \path (4) edge node {18} (5);
    \path (5) edge node {20} (6);
    \path (6) edge node {22} (7);
    \path (7) edge node {24} (8);
    \path (8) edge node {2} (9);
    \path (9) edge node {4} (10);
    \path (10) edge node {6} (11);
    \path (11) edge node {8} (12);
    \path (12) edge node {10} (1);
    
    \path (1) edge[bend left=20] node {23} (5);
    \path (2) edge[bend left=20] node {1} (6);
    \path (3) edge[bend left=20] node {3} (7);
    \path (4) edge[bend left=20] node {5} (8);
    \path (5) edge[bend left=20] node {7} (9);
    \path (6) edge[bend left=20] node {9} (10);
    \path (7) edge[bend left=20] node {11} (11);
    \path (8) edge[bend left=20] node {13} (12);
    \path (9) edge[bend left=20] node {15} (1);
    \path (10) edge[bend left=20] node {17} (2);
    \path (11) edge[bend left=19] node {19} (3);
    \path (12) edge[bend left=19] node {21} (4);
  \end{scope}
  \end{scope}
\end{tikzpicture}
}
\caption{$Ci_{10}(1,4)$ and $Ci_{12}(1,4)$}
\end{figure}

\newpage
\section{1D Polygon Graphs}
Arbitrarily choose one of the non-shared edges of the first polygon which does not correspond to a shared edge, and number it $1$. Number the corresponding sides on each polygon sequentially, from left to right. Repeat until all non-shared edges are numbered, except the $2$ sides on each end polygon that correspond to shared edges. From left to right, number these sequentially. We claim this is a CBO.

\begin{figure}[htbp]
    \centering
        \begin{tikzpicture}[every node/.style={circle,thick,draw},scale=1.3] 
        
        \begin{scope}
        \node (1) at (0.5, 0.875) {$v_1$};
        \node (2) at (0, 2) {$v_2$};
        \node (3) at (1.125, 2.875) {$v_3$};
        \node (4) at (2.25, 2) {$v_4$};
        \node (5) at (1.75, 0.875) {$v_5$};
        \node (6) at (2.875, 0) {$v_6$};
        \node (7) at (4, 0.875) {$v_7$};
        \node (8) at (3.5, 2) {$v_8$};
        \node (9) at (4.625, 2.875) {$v_9$};
        \node (10) at (5.75, 2) {$v_{10}$};
        \node (11) at (5.25, 0.875) {$v_{11}$};
        \node (12) at (6.375, 0) {$v_{12}$};
        \node (13) at (7.5, 0.875) {$v_{13}$};
        \node (14) at (7, 2) {$v_{14}$};
        \node (15) at (8.125, 2.875) {$v_{15}$};
        \node (16) at (9.25, 2) {$v_{16}$};
        \node (17) at (8.75, 0.875) {$v_{17}$};
        \begin{scope}[>={},every node/.style={fill=white,circle,inner sep=0pt,minimum size=12pt}]
        \path [] (1) edge (2);
        \path [] (2) edge (3);
        \path [] (3) edge (4);
        \path [] (4) edge (5);
        \path [] (1) edge (5);
        \path [] (5) edge (6);
        \path [] (6) edge (7);
        \path [] (7) edge (8);
        \path [] (8) edge (4);
        \path [] (8) edge (9);
        \path [] (9) edge (10);
        \path [] (10) edge (11);
        \path [] (11) edge (7);
        \path [] (11) edge (12);
        \path [] (12) edge (13);
        \path [] (13) edge (14);
        \path [] (14) edge (10);
        \path [] (14) edge (15);
        \path [] (15) edge (16);
        \path [] (16) edge (17);
        \path [] (17) edge (13);
        \end{scope}
        \end{scope}
        
        \begin{scope}[yshift=-4cm]
        \node (1) at (0.5, 0.875) {$v_1$};
        \node (2) at (0, 2) {$v_2$};
        \node (3) at (1.125, 2.875) {$v_3$};
        \node (4) at (2.25, 2) {$v_4$};
        \node (5) at (1.75, 0.875) {$v_5$};
        \node (6) at (2.875, 0) {$v_6$};
        \node (7) at (4, 0.875) {$v_7$};
        \node (8) at (3.5, 2) {$v_8$};
        \node (9) at (4.625, 2.875) {$v_9$};
        \node (10) at (5.75, 2) {$v_{10}$};
        \node (11) at (5.25, 0.875) {$v_{11}$};
        \node (12) at (6.375, 0) {$v_{12}$};
        \node (13) at (7.5, 0.875) {$v_{13}$};
        \node (14) at (7, 2) {$v_{14}$};
        \node (15) at (8.125, 2.875) {$v_{15}$};
        \node (16) at (9.25, 2) {$v_{16}$};
        \node (17) at (8.75, 0.875) {$v_{17}$};
        \begin{scope}[>={},every node/.style={fill=white,circle,inner sep=0pt,minimum size=12pt}]
        \path [] (1) edge (2);
        \path [] (2) edge node {1} (3);
        \path [] (3) edge (4);
        \path [] (4) edge (5);
        \path [] (1) edge (5);
        \path [] (5) edge node {2} (6);
        \path [] (6) edge (7);
        \path [] (7) edge (8);
        \path [] (8) edge (4);
        \path [] (8) edge node {3} (9);
        \path [] (9) edge (10);
        \path [] (10) edge (11);
        \path [] (11) edge (7);
        \path [] (11) edge node {4} (12);
        \path [] (12) edge (13);
        \path [] (13) edge (14);
        \path [] (14) edge (10);
        \path [] (14) edge node {5} (15);
        \path [] (15) edge (16);
        \path [] (16) edge (17);
        \path [] (17) edge (13);
        \end{scope}
        \end{scope}
        
        \begin{scope}[yshift=-8cm]
        \node (1) at (0.5, 0.875) {$v_1$};
        \node (2) at (0, 2) {$v_2$};
        \node (3) at (1.125, 2.875) {$v_3$};
        \node (4) at (2.25, 2) {$v_4$};
        \node (5) at (1.75, 0.875) {$v_5$};
        \node (6) at (2.875, 0) {$v_6$};
        \node (7) at (4, 0.875) {$v_7$};
        \node (8) at (3.5, 2) {$v_8$};
        \node (9) at (4.625, 2.875) {$v_9$};
        \node (10) at (5.75, 2) {$v_{10}$};
        \node (11) at (5.25, 0.875) {$v_{11}$};
        \node (12) at (6.375, 0) {$v_{12}$};
        \node (13) at (7.5, 0.875) {$v_{13}$};
        \node (14) at (7, 2) {$v_{14}$};
        \node (15) at (8.125, 2.875) {$v_{15}$};
        \node (16) at (9.25, 2) {$v_{16}$};
        \node (17) at (8.75, 0.875) {$v_{17}$};
        \begin{scope}[>={},every node/.style={fill=white,circle,inner sep=0pt,minimum size=12pt}]
        \path [] (1) edge (2);
        \path [] (2) edge node {1} (3);
        \path [] (3) edge node {6} (4);
        \path [] (4) edge (5);
        \path [] (1) edge (5);
        \path [] (5) edge node {2} (6);
        \path [] (6) edge node {7} (7);
        \path [] (7) edge (8);
        \path [] (8) edge (4);
        \path [] (8) edge node {3} (9);
        \path [] (9) edge node {8} (10);
        \path [] (10) edge (11);
        \path [] (11) edge (7);
        \path [] (11) edge node {4} (12);
        \path [] (12) edge node {9} (13);
        \path [] (13) edge (14);
        \path [] (14) edge (10);
        \path [] (14) edge node {5} (15);
        \path [] (15) edge node {10} (16);
        \path [] (16) edge (17);
        \path [] (17) edge (13);
        \end{scope}
        \end{scope}
        
        \end{tikzpicture}
        \caption{Algorithm on a 1D Pentagonal graph of length 5}
\end{figure}
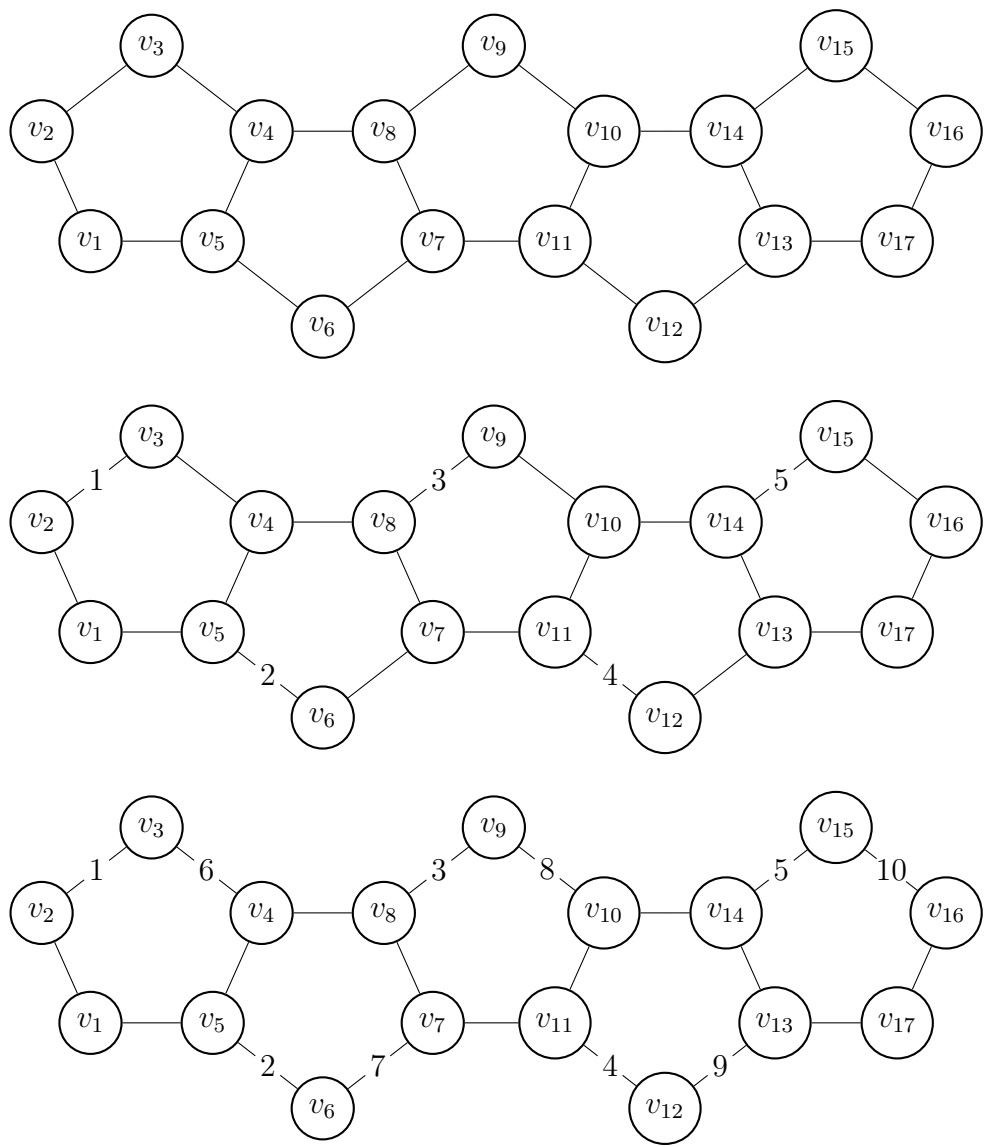
\begin{figure}[htbp]
    \centering
        \begin{tikzpicture}[every node/.style={circle,thick,draw},scale=1.3] 
        
        \begin{scope}[]
        \node (1) at (0.5, 0.875) {$v_1$};
        \node (2) at (0, 2) {$v_2$};
        \node (3) at (1.125, 2.875) {$v_3$};
        \node (4) at (2.25, 2) {$v_4$};
        \node (5) at (1.75, 0.875) {$v_5$};
        \node (6) at (2.875, 0) {$v_6$};
        \node (7) at (4, 0.875) {$v_7$};
        \node (8) at (3.5, 2) {$v_8$};
        \node (9) at (4.625, 2.875) {$v_9$};
        \node (10) at (5.75, 2) {$v_{10}$};
        \node (11) at (5.25, 0.875) {$v_{11}$};
        \node (12) at (6.375, 0) {$v_{12}$};
        \node (13) at (7.5, 0.875) {$v_{13}$};
        \node (14) at (7, 2) {$v_{14}$};
        \node (15) at (8.125, 2.875) {$v_{15}$};
        \node (16) at (9.25, 2) {$v_{16}$};
        \node (17) at (8.75, 0.875) {$v_{17}$};
        \begin{scope}[>={},every node/.style={fill=white,circle,inner sep=0pt,minimum size=12pt}]
        \path [] (1) edge (2);
        \path [] (2) edge node {1} (3);
        \path [] (3) edge node {6} (4);
        \path [] (4) edge (5);
        \path [] (1) edge node {11} (5);
        \path [] (5) edge node {2} (6);
        \path [] (6) edge node {7} (7);
        \path [] (7) edge (8);
        \path [] (8) edge node {12} (4);
        \path [] (8) edge node {3} (9);
        \path [] (9) edge node {8} (10);
        \path [] (10) edge (11);
        \path [] (11) edge node {13} (7);
        \path [] (11) edge node {4} (12);
        \path [] (12) edge node {9} (13);
        \path [] (13) edge (14);
        \path [] (14) edge node {14} (10);
        \path [] (14) edge node {5} (15);
        \path [] (15) edge node {10} (16);
        \path [] (16) edge (17);
        \path [] (17) edge node {15} (13);
        \end{scope}
        \end{scope}
        
        \begin{scope}[yshift=-4cm]
        \node (1) at (0.5, 0.875) {$v_1$};
        \node (2) at (0, 2) {$v_2$};
        \node (3) at (1.125, 2.875) {$v_3$};
        \node (4) at (2.25, 2) {$v_4$};
        \node (5) at (1.75, 0.875) {$v_5$};
        \node (6) at (2.875, 0) {$v_6$};
        \node (7) at (4, 0.875) {$v_7$};
        \node (8) at (3.5, 2) {$v_8$};
        \node (9) at (4.625, 2.875) {$v_9$};
        \node (10) at (5.75, 2) {$v_{10}$};
        \node (11) at (5.25, 0.875) {$v_{11}$};
        \node (12) at (6.375, 0) {$v_{12}$};
        \node (13) at (7.5, 0.875) {$v_{13}$};
        \node (14) at (7, 2) {$v_{14}$};
        \node (15) at (8.125, 2.875) {$v_{15}$};
        \node (16) at (9.25, 2) {$v_{16}$};
        \node (17) at (8.75, 0.875) {$v_{17}$};
        \begin{scope}[>={},every node/.style={fill=white,circle,inner sep=0pt,minimum size=12pt}]
        \path [] (1) edge node {16} (2);
        \path [] (2) edge node {1} (3);
        \path [] (3) edge node {6} (4);
        \path [] (4) edge node {17} (5);
        \path [] (1) edge node {11} (5);
        \path [] (5) edge node {2} (6);
        \path [] (6) edge node {7} (7);
        \path [] (7) edge node {18} (8);
        \path [] (8) edge node {12} (4);
        \path [] (8) edge node {3} (9);
        \path [] (9) edge node {8} (10);
        \path [] (10) edge node {19} (11);
        \path [] (11) edge node {13} (7);
        \path [] (11) edge node {4} (12);
        \path [] (12) edge node {9} (13);
        \path [] (13) edge node {20} (14);
        \path [] (14) edge node {14} (10);
        \path [] (14) edge node {5} (15);
        \path [] (15) edge node {10} (16);
        \path [] (16) edge node {21} (17);
        \path [] (17) edge node {15} (13);
        \end{scope}
        \end{scope}
        
        \end{tikzpicture}
        \caption{Algorithm on a 1D Pentagonal graph of length 5 continued}
\end{figure}

\newpage

\end{document}